\newcommand{\Z}{{\mathbb{Z}}}
\newcommand{\R}{{\mathbb{R}}}
\newcommand{\C}{{\mathbb{C}}}
\newcommand{\cD}{{\mathcal{D}}}
\newcommand{\cG}{{\mathcal{G}}}
\newcommand{\cH}{{\mathcal{H}}}
\newcommand{\ulc}{{\mathfrak{c}}}
\newcommand{\fs}{{\mathfrak{s}}}
\newcommand{\ft}{{\mathfrak{t}}}
\newcommand{\fu}{{\mathfrak{u}}}
\newcommand{\fC}{{\mathfrak{C}}}
\newcommand{\Irr}{{\operatorname{Irr}}}
\newcommand{\Hom}{{\operatorname{Hom}}}
\newcommand{\End}{{\operatorname{End}}}
\renewcommand{\leq}{\leqslant}
\renewcommand{\geq}{\geqslant}
\newtheorem{thm}{Theorem}[section]
\newtheorem{lem}[thm]{Lemma}
\newtheorem{cor}[thm]{Corollary}
\newtheorem{prop}[thm]{Proposition}
\newtheorem{conj}[thm]{Conjecture}
\theoremstyle{definition}
\newtheorem{exmp}[thm]{Example}
\newtheorem{defn}[thm]{Definition}
\theoremstyle{remark}
\newtheorem{rem}[thm]{Remark}
\begin{document}

\title{Kazhdan--Lusztig cells and the Frobenius--Schur indicator}

\author{Meinolf Geck}
\address{Institute of Mathematics, University of Aberdeen, Aberdeen AB24
3UE, UK}
\email{m.geck@abdn.ac.uk}

\subjclass[2000]{Primary 20C08; Secondary 20C15}
\date{October 2011}

\begin{abstract} Let $W$ be a finite Coxeter group. It is well-known
that the number of involutions in $W$ is equal to the sum of the
degrees of the irreducible characters of $W$. Following a suggestion
of Lusztig, we show that this equality is compatible with the 
decomposition of $W$ into Kazhdan--Lusztig cells. The proof uses
a generalisation of the Frobenius--Schur indicator to symmetric
algebras, which may be of independent interest.
\end{abstract}

\maketitle

%\dedicatory{\hspace{3cm} \it To Geoff Robinson on his $60$th birthday}
%%%%%%%%%%%%%%%%%%%%%%%%%%%%%%%%%%%%%%%%%%%%%%%%%%%%%%%%%%%%%%%%%%%%%%%
\section{Introduction} \label{sec0}

Let $G$ be a finite group and assume that all complex irreducible
characters of $G$ can be realised over the real numbers. Then, by 
a well-known result due to Frobenius and Schur, the number of 
involutions in $G$ (that is, elements $g \in G$ such that $g^2=1$) is 
equal to the sum of the degrees of the irreducible characters of $G$.

In this note, we consider the case where $G=W$ is a finite Coxeter group.
Following a suggestion of Lusztig, we show that the above equality is
compatible with the decomposition of $W$ into cells, as defined by 
Kazhdan and Lusztig \cite{KaLu} (in the equal parameter case) and by
Lusztig \cite{Lusztig83} (in general). The proof relies on two basic 
ingredients. The first consists of establishing a suitable generalisation 
of the ``Frobenius--Schur indicator'' to symmetric algebras. This will be 
done in Section~\ref{sec1}, and may be of independent interest. The 
second ingredient is the theory around Lusztig's ring $J$ (originally 
introduced in \cite{Lu2}) or, rather, its more elementary version 
constructed in \cite{my02}; see Section~\ref{sec2}. 

To state the main result, let us fix some notation. Let $S$ be a set 
of simple reflections in $W$. Let $\{c_s \mid s \in S\} \subseteq 
\Z_{\geq 0}$ be a set of ``weights'' where $c_s=c_{s'}$ whenever $s,
s' \in S$ are conjugate in $W$. This gives rise to a weight function
$L \colon W \rightarrow \Z$ in the sense of Lusztig \cite{Lusztig03}; 
for $w \in W$, we have $L(w)=c_{s_1}+\ldots + c_{s_k}$ where $w=
s_1\cdots s_k$ ($s_i \in S$) is a reduced expresssion for~$w$. (The 
original setup in \cite{KaLu} corresponds to the case where $c_s=1$
for all $s \in S$.)  Using the Kazhdan--Lusztig basis of the generic
Iwahori--Hecke algebra associated with $W,L$, one can define partitions 
of $W$ into left, right and two-sided cells. For any such left cell 
$\Gamma$ of $W$, we have a corresponding left $W$-module $[\Gamma]_1$ 
with a standard basis indexed by the elements of $\Gamma$; see \cite{KaLu} 
(equal parameter case) or \cite{Lusztig83} (in general).
  
\begin{thm} \label{propleft} The number of involutions in a left cell 
$\Gamma$ is equal to the number of terms in a decomposition of $[\Gamma]_1$ 
as a direct sum of simple $W$-modules.
\end{thm}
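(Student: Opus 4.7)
Proof plan. The argument combines the generalized Frobenius--Schur indicator for symmetric algebras developed in Section~\ref{sec1} with Lusztig's asymptotic ring $J$ reviewed in Section~\ref{sec2}.

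Recall that Section~\ref{sec1} constructs, for any finite-dimensional symmetric $\C$-algebra $A$ with anti-involution $\sigma$, a central Frobenius--Schur element $\nu_\sigma^A$ built from a basis and its dual with respect to the symmetrizing form, together with per-simple-module indicators $\nu_\sigma(V)$. Its trace on any $A$-module $M$ decomposes both as $\sum_V m_V\,\nu_\sigma(V)$ on the representation side and as a combinatorial fixed-point count on the basis side. I apply this machine with $A = \C J$, basis $\{t_w \mid w \in W\}$, dual basis $\{t_{w^{-1}}\}$ (with respect to $\tau_J$), and anti-involution $\sigma \colon t_w \mapsto t_{w^{-1}}$.

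Lusztig's homomorphism $\phi$, specialised at $v = 1$, yields an algebra isomorphism $\C J \cong \C W$ that matches the simple modules and intertwines the two anti-involutions. Since every irreducible $W$-character is realizable over $\R$, $\nu_\sigma(V) = +1$ for every simple $\C J$-module $V$. Crucially, the particular form of the Schur elements for $\C J$, as supplied by the asymptotic theory of Section~\ref{sec2}, is what guarantees that the trace of $\nu_\sigma^{\C J}$ on a simple $V$ equals $\nu_\sigma(V)$ itself, without the dimension factor that appears in the group-algebra case.

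Now let $\Gamma$ be a left cell. By the cellular structure of $J$, the subspace $J_\Gamma := \bigoplus_{w \in \Gamma} \C t_w$ is a left $\C J$-submodule realising the cell module $[\Gamma]_1 \cong \bigoplus_V V^{m_V}$. On the representation side, the trace of $\nu_\sigma^{\C J}$ on $J_\Gamma$ equals $\sum_V m_V\,\nu_\sigma(V) = \sum_V m_V$. On the basis side, the combinatorial formula from Section~\ref{sec1} identifies this trace with the number of basis elements of $J_\Gamma$ fixed by $\sigma$, namely $\#\{w \in \Gamma \mid w = w^{-1}\}$, the number of involutions in $\Gamma$. Equating the two expressions gives the theorem.

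The chief obstacle is that $\sigma$ does not preserve $J_\Gamma$: it maps $J_\Gamma$ onto $J_{\Gamma^{-1}}$, where $\Gamma^{-1}$ is a right cell. One must arrange that the generalized Frobenius--Schur formula of Section~\ref{sec1} remains valid for non-$\sigma$-stable submodules, and that it extracts precisely the $\sigma$-fixed basis elements lying in $\Gamma$ on one side, and the unweighted multiplicity sum $\sum_V m_V$ (rather than $\sum_V m_V \dim V$) on the other. Verifying this, together with the compatibility of Schur elements in $\C J$ needed to kill the dimension factor, is the main technical content of the argument; summing the resulting identity over all left cells must of course recover the classical Frobenius--Schur count $\#\{w \in W \mid w^2 = 1\} = \sum_V \dim V$, which serves as a consistency check.
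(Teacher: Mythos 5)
Your plan identifies the right two ingredients (the symmetric-algebra Frobenius--Schur indicator and the asymptotic ring), and you correctly flag the two obstructions: the anti-involution $\sigma\colon t_w\mapsto t_{w^{-1}}$ does not preserve the left ideal $J_\Gamma$, and the trace formula of Section~\ref{sec1} produces $\sum_E\nu_E\dim E$, not the unweighted multiplicity sum $\sum_E m_E$. But you leave both unresolved, and the way you propose to resolve them cannot work. If $\nu_\sigma^{\C J}$ is a central element acting on each simple $V$ by the scalar $\nu_\sigma(V)$, then its trace on a module with isotypic decomposition $\bigoplus_V V^{\oplus m_V}$ is $\sum_V m_V(\dim V)\nu_\sigma(V)$; the factor $\dim V$ is forced by taking a trace of a scalar on a space of dimension $m_V\dim V$, and no choice of Schur elements can remove it. There is no ``compatibility of Schur elements'' that kills this factor, so the representation-theoretic side of your count is simply wrong as stated. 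The second gap is structural: Proposition~\ref{prop22} is a statement about $\operatorname{trace}(\dagger)$ on the \emph{whole} algebra, and does not apply to a non-$\dagger$-stable subspace like $J_\Gamma$.

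The paper's actual argument circumvents both problems with one move that you have not found: instead of applying the Frobenius--Schur machinery to the cell module, apply it to its endomorphism algebra. Lemma~\ref{copylus}(c) realises $\End_{\tilde J}([\fC]_{\tilde J})$ concretely as the subalgebra $\tilde J_\fC=\langle t_w\mid w\in\fC\cap\fC^{-1}\rangle_K$. This subalgebra \emph{is} $\dagger$-stable, inherits the trace form and the $\dagger$-symmetric basis $B_{0,\fC}=\{t_w\mid w\in\fC\cap\fC^{-1}\}$, and is split semisimple. Applying Proposition~\ref{prop22} and Example~\ref{expreal} to $\tilde J_\fC$ gives $|\fC_{(2)}|=\sum_{M\in\Irr(\tilde J_\fC)}\dim M$; now the crucial point is that the simple modules of an endomorphism algebra have \emph{dimensions equal to the multiplicities} in the original module, so $\sum_M\dim M=\sum_\lambda\tilde m(\fC,\lambda)$. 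The dimension factor that blocks your route is thus turned into exactly the multiplicity you want. Two further steps you also omit are needed: Lemma~\ref{lem32} compares the $\tilde J$-cell multiplicities $\tilde m(\Gamma,\lambda)$ with the $K[W]$-multiplicities $m(\Gamma,\lambda)$ of $[\Gamma]_1$ (your appeal to ``Lusztig's homomorphism $\phi$ at $v=1$'' is not available here without assuming {\bf P1}--{\bf P15}); and the argument is carried out for the ring $\tilde J$ of \cite{myedin}, not for Lusztig's $J$, precisely to avoid those conjectural hypotheses.
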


For $W$ of classical type and the equal parameter case, the above result 
(in a somewhat more precise form, see Example~\ref{exp1} below) was first 
obtained by Lusztig \cite[12.17]{LuBook}, using the representation theory 
of a finite reductive group with Weyl group $W$. Our proof works uniformly 
for all $W,L$ (including $W$ of non-crystallographic type). In 
Corollary~\ref{proptwo}, we also obtain a similar result for two-sided 
cells.  Along the way, we establish some properties of left cell modules
which previously were only known to hold in the equal parameter case;
see Corollaries~\ref{cor32} and \ref{cor30}.

%%%%%%%%%%%%%%%%%%%%%%%%%%%%%%%%%%%%%%%%%%%%%%%%%%%%%%%%%%%%%%%%%%%%%%%
\section{Symmetric algebras and the Frobenius--Schur indicator} \label{sec1}
Let $K$ be a field of characteristic $0$ and $\cH$ be a finite-dimensional 
associative $K$-algebra (with $1$). We assume that $\cH$ is split semisimple
and symmetric, with trace form $\tau \colon \cH \rightarrow K$. 
Let $\Irr(\cH)$ be the set of simple $\cH$-modules (up to isomorphism).
For $E \in \Irr(\cH)$, let $\chi_E \colon \cH \rightarrow K$ be the
corresponding character, $\chi_E(h)=\mbox{trace}(h,E)$ for all $h \in \cH$.
We have 
\[ \tau=\sum_{E \in \Irr(\cH)} c_E^{-1} \, \chi_E\]
where each $c_E$ is a certain non-zero element of $K$, called the
{\em Schur element} associated with $E$. (We refer to \cite[Chap.~7]{gepf} 
for basic facts about symmetric algebras.)

We shall further assume that there is a $K$-linear anti-involution
\[\dagger\colon \cH \rightarrow \cH, \qquad h \mapsto h^\dagger.\]
This allows us to define, for any finite-dimensional (left) $\cH$-module $M$,
a corresponding {\em contragredient} module $\hat{M}$. As a $K$-vector 
space, we have $\hat{M}= \Hom_K(M,K)$; the action of $h\in\cH$ on 
$f\in\hat{M}$ is determined by $(h.f)(m)=f(h^\dagger.m)$ for all $m \in M$. 

\begin{defn} \label{rem2} Let $M$ be a finite-dimensional (left) $\cH$-module.
We shall say that a bilinear map $(\;,\;) \colon M \times M \rightarrow K$ 
is $\cH$-invariant if 
\[(h.m,m')=(m,h^\dagger.m')\qquad\mbox{for all $h\in\cH$ and $m,m'\in M$}.\]
Via the isomorphism $\Hom_K(M,K) \otimes_K M \cong \Hom_K(M,M)$ 
(and an identification of $M$ with $\Hom_K(M,K)$ using dual bases), 
an $\cH$-invariant bilinear from on $M$ can also be interpreted as an
$\cH$-module homomorphism $\hat{M} \rightarrow M$, and vice versa. 
In particular, for $E \in \Irr(\cH)$, we have $E \cong \hat{E}$ if and 
only if there exists a non-degenerate $\cH$-invariant bilinear form on $E$; 
also note that a non-zero $\cH$-invariant bilinear form on $E$ is 
automatically non-degenerate (by Schur's Lemma). 
\end{defn}

Given any basis $B$ of $\cH$, we denote by 
$B^\vee=\{b^\vee \mid b \in B\}$ the corresponding dual basis, that is, we 
have
\[ \tau(b'b^\vee)=\left\{\begin{array}{cl} 1 & \quad \mbox{if $b=b'$},\\
0 & \quad \mbox{otherwise}.\end{array}\right.\]

\begin{defn} \label{def1} Let $B_0$ be a basis fo $\cH$. We say that
$B_0$ is {\em $\dagger$-symmetric} if $b^\dagger=b^\vee$ for all $b \in B_0$.
\end{defn}

The standard example is the case where $\cH=K[G]$ is the group algebra 
of a finite group $G$ over $K=\C$ and $\tau$ is the trace form defined 
by $\tau(1)=1$ and $\tau(g)=0$ for $g \in G$ such that $g \neq 1$. We 
have an anti-involution $\dagger\colon \cH \rightarrow \cH$ given by 
$g^\dagger=g^{-1}$; then $B_0=G$ is a $\dagger$-symmetric basis of 
$\cH$. Further examples are provided by the algebra $\tilde{J}$ in
Section~\ref{sec2} and by the ``based rings'' considered by Lusztig 
\cite{Lu4}.

\begin{rem} \label{rem1} Assume that there exists a $\dagger$-symmetric 
basis $B_0$. This implies that
\begin{equation*}
\tau(h)=\tau(h^\dagger) \qquad \mbox{for all $h \in \cH$}.\tag{a}
\end{equation*}
Indeed, write the identity element of $\cH$ as $1_{\cH}=\sum_{b\in 
B_0} \alpha_b\, b$ where $\alpha_b \in K$ for all $b \in B_0$. Then 
a straightforward computation shows that $\tau(b^\dagger)=\tau(1_{\cH}
b^\vee)=\alpha_b$ for all $b \in B_0$. Now, we certainly have $1_{\cH}=
1_{\cH}^\dagger=\sum_{b \in B_0} \alpha_b\, b^\vee$. Hence, similarly, 
we also obtain $\tau(b)=\tau(1_{\cH}^\dagger b)=\alpha_b$ for all $b\in 
B_0$. Thus, (a) holds. Now let $E \in \Irr(\cH)$. Then, clearly, we have 
\begin{equation*} 
\chi_{\hat{E}}(b)=\chi_E(b^\dagger)=\chi_E(b^\vee) \qquad \mbox{for 
all $b \in B_0$}.\tag{b}
\end{equation*}
This also implies that $c_E=c_{\hat{E}}$ since
\[ \tau(b)=\tau(b^\dagger)=\sum_{E \in \Irr(\cH)} c_E^{-1}\, 
\chi_E(b^\dagger)= \sum_{E \in \Irr(\cH)} c_E^{-1}\, \chi_{\hat{E}}(b) 
\qquad \mbox{for all $b \in B_0$}.\]
\end{rem}

At first sight, the condition in Definition~\ref{def1} looks rather
strong. But the following remark shows that $\dagger$-symmetric bases
of $\cH$ always exist under some quite natural assumptions. 

\begin{rem} \label{rem0} There exists a $\dagger$-symmetric basis of $\cH$
if the following two conditions are satisfied:
\begin{itemize}
\item[(a)] $\tau(h)=\tau(h^\dagger)$ for all $h \in \cH$.
\item[(b)] $K$ is sufficiently large (which means
here: $K$ contains sufficiently many square roots).
\end{itemize}
Indeed, consider the bilinear form $\cH \times \cH \rightarrow K$, 
$(h,h') \mapsto \tau(h'h^\dagger)$. By (a), this bilinear form is 
symmetric; furthermore, one easily sees that it is non-degenerate. Hence, 
since $\mbox{char}(K)=0$, there exists an orthogonal basis of $\cH$
with respect to that form. If now $K$ contains sufficiently many square
roots, then we can rescale the basis elements and obtain an orthonormal
basis of $\cH$; any such basis is $\dagger$-symmetric.
\end{rem}

We can now state the following two propositions which generalise classical
results concerning the Frobenius--Schur indicator for characters of finite
groups (see, for example, Etingof et al. \cite[\S 5.1]{eti11}) to 
symmetric algebras as above. 

\begin{prop} \label{prop21} Assume that $B_0$ is a $\dagger$-symmetric 
basis of $\cH$. Let $E \in \Irr(\cH)$ and define
\[\nu_E:=\frac{1}{c_E\dim E}\sum_{b\in B_0}\chi_E(b^2).\]
Then we have $\nu_E \in \{0,\pm 1\}$; furthermore, the following hold:
\begin{itemize}
\item[(a)] $\nu_E =0$ if and only if $E \not\cong \hat{E}$.
\item[(b)] $\nu_E =1$ if and only if $E \cong \hat{E}$ and there exists a 
non-degenerate, symmetric $\cH$-invariant bilinear form on $E$.
\item[(c)] $\nu_E =-1$ if and only if $E \cong \hat{E}$ and there exists a 
non-degenerate, alternating $\cH$-invariant bilinear form on $E$.
\end{itemize}
(In particular, $\nu_E$ does not depend on the choice of $B_0$.)
\end{prop}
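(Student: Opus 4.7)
The plan is to exploit the Wedderburn decomposition $\cH \cong \bigoplus_{F \in \Irr(\cH)} \End_K(F)$, writing $\pi_F$ for the projection onto the $F$-block, and to recognize $\sum_b \chi_E(b^2) = \sum_b \mathrm{tr}(\pi_E(b)^2)$ as the trace of a suitable operator on $E \otimes E$ built from the canonical Casimir element $\omega := \sum_{b \in B_0} b \otimes b^\vee \in \cH \otimes \cH$ associated with $\tau$. First I would observe that, since $\tau$ restricts to $c_F^{-1}\mathrm{tr}_F$ on each block, a matrix-unit computation gives $(\pi_E \otimes \pi_F)(\omega) = \delta_{EF}\, c_E s_E$, where $s_E \in \End_K(E \otimes E)$ is the swap. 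Using the standard identity $\mathrm{tr}(AB) = \mathrm{tr}_{E\otimes E}(s_E(A \otimes B))$ then yields
\[ \textstyle\sum_b \chi_E(b^2) \;=\; \mathrm{tr}_{E\otimes E}\bigl(s_E \cdot (\pi_E \otimes \pi_E)(\textstyle\sum_b b \otimes b)\bigr). \]

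Next, the $\dagger$-symmetry of $B_0$ combined with $\dagger^2 = \mathrm{id}$ gives $\sum_b b \otimes b = (\mathrm{id} \otimes \dagger)(\omega)$. The crucial observation is that $\chi_{\hat F} = \chi_F \circ \dagger$ on all of $\cH$ (immediate from the definition of the contragredient action), and so the anti-automorphism $\dagger$ sends $\End_K(F)$ to $\End_K(\hat F)$ for each $F$. Hence $(\mathrm{id} \otimes \dagger)(\omega)$ is supported on pieces $\End_K(F) \otimes \End_K(\hat F)$, and applying $\pi_E \otimes \pi_E$ annihilates everything unless $E \cong \hat E$. This immediately yields $\nu_E = 0$ when $E \not\cong \hat E$, proving~(a).

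When $E \cong \hat E$, $\dagger$ restricts to an anti-involution $\dagger_E$ on $\End_K(E)$ and $(\pi_E \otimes \pi_E)(\sum_b b \otimes b) = c_E \cdot (\mathrm{id} \otimes \dagger_E)(s_E)$. In a basis of $E$, the split hypothesis forces $\dagger_E$ to have the form $A \mapsto JA^T J^{-1}$ with $J^T = \epsilon J$ and $\epsilon \in \{\pm 1\}$. A direct matrix-unit calculation then gives
\[ \mathrm{tr}_{E\otimes E}\bigl(s_E (\mathrm{id} \otimes \dagger_E)(s_E)\bigr) \;=\; \mathrm{tr}(J^T J^{-1}) \;=\; \epsilon \dim E, \]
so $\nu_E = \epsilon$. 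To conclude (b) and (c), I would identify $J$ with the Gram matrix of the essentially unique non-zero $\cH$-invariant bilinear form $\beta$ on $E$ (unique up to scalar by Schur, since $\Hom_\cH(E,\hat E)$ is $1$-dimensional); then $J^T = \epsilon J$ is precisely the condition that $\beta$ be symmetric ($\epsilon = 1$) or alternating ($\epsilon = -1$). That $\beta$ must be one of the two follows because its transpose is another $\cH$-invariant form, hence a scalar multiple $c\beta$, and iterating gives $c^2 = 1$. The main delicate step is tracking how $\dagger$ interacts with the block decomposition, namely that $(\mathrm{id} \otimes \dagger)(\omega)$ is supported on the off-diagonal pieces $\End_K(F) \otimes \End_K(\hat F)$; once this is set up, the remainder is a routine matrix-unit computation uniform in $\epsilon$.
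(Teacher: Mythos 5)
Your proof is correct, and it takes a genuinely different route from the paper's. The paper follows the classical Frobenius--Schur argument component by component: it fixes a matrix representation $\rho$, applies the Schur relations from \cite[7.2.2, 7.2.4]{gepf} directly to the matrix coefficients $\rho_{ij}(b)$, and introduces the intertwiner $P$ with $P\rho(b)=\rho(b^\vee)'P$, showing $P'=\eta P$ and then computing $\sum_b \chi_E(b^2)=\eta\sum_b\chi_E(b)\chi_E(b^\vee)=\eta\,c_E\dim E$ by character orthogonality. Your argument packages the same inputs conceptually: the Casimir $\omega=\sum_b b\otimes b^\vee$ encodes the Schur relations once and for all via the block identity $(\pi_E\otimes\pi_F)(\omega)=\delta_{EF}\,c_E\,s_E$; the $\dagger$-symmetric hypothesis becomes the clean statement $\sum_b b\otimes b=(\mathrm{id}\otimes\dagger)(\omega)$; and the sign $\epsilon$ emerges from classifying the anti-involution $\dagger_E$ of the block $\End_K(E)$ (your $J$ plays the role of the paper's $P$, or rather of its transpose-inverse up to scalar, which is why the signs match). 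What your approach buys is that the independence of $\nu_E$ from the choice of $B_0$ is manifest from the start, since $\omega$ is canonical, whereas the paper obtains this only as a parenthetical corollary; it also isolates the one genuinely representation-theoretic fact (the block form of the Casimir) from the bookkeeping. What the paper's approach buys is that it requires no familiarity with the Wedderburn/Casimir formalism and stays elementary, closely paralleling the classical group-algebra case for readers who know that argument. Both proofs hinge on the same two facts: Schur orthogonality for the symmetric form $\tau$, and the dichotomy $\epsilon=\pm1$ for the symmetry type of the essentially unique invariant bilinear form on a self-dual simple module.
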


\begin{proof} This very closely follows the original proof of Frobenius 
and Schur, as presented by Curtis \cite[Chap. IV, \S 3]{cur99}. We choose 
a basis of $E$ and obtain a corresponding matrix representation $\rho 
\colon \cH \rightarrow M_d(K)$ where $d=\dim E$. For $h \in \cH$ and 
$i,j \in \{1,\ldots,d\}$, we denote by $\rho_{ij}(h)$ the 
$(i,j)$-coefficient of $\rho(h)$. Taking the dual basis in $\hat{E}$, a 
matrix representation afforded by $\hat{E}$ is then given by 
$\hat{\rho}(b)=\rho(b^\vee)^\prime$ for all $b \in B_0$, where the prime 
denotes the transpose matrix. 

Assume first that $E \not\cong \hat{E}$. Then the Schur relations in 
\cite[7.2.2]{gepf} yield:
\[ \sum_{b \in B_0} \rho_{ij}(b)\,\hat{\rho}_{kl}(b^\vee)=0 \qquad
\mbox{ for all $i,j,k,l \in \{1,\ldots,d\}$}.\]
Using the above description of $\hat{\rho}$, we conclude that 
\[ \sum_{b \in B_0} \rho_{ij}(b)\,\rho_{lk}(b)=0 \qquad
\mbox{ for all $i,j,k,l \in \{1,\ldots,d\}$}.\]
Now let $l=j$ and $k=i$. Then summing over all $i,j$ yields 
\[ 0=\sum_{1\leq i,j \leq d} \sum_{b \in B_0} \rho_{ij}(b)\,\rho_{ji}(b)
=\sum_{b \in B_0} \sum_{1 \leq i \leq d} \rho_{ii}(b^2)=\sum_{b \in B_0}
\chi_E(b^2).\]
Thus, we have $\nu_E=0$ in this case, as required.

Now assume that $E\cong \hat{E}$. This means that there exists an
invertible matrix $P \in M_d(K)$ such that 
\[P\,\rho(b)=\rho(b^\vee)^\prime\,P\qquad \mbox{for all $b \in B_0$}.\]
A standard argument using Schur's Lemma (see \cite[p.~153]{cur99}) then 
shows that $P^\prime=\eta P$ where $\eta=\pm 1$. Note that a similar 
statement is true for any matrix $Q \in M_d(K)$ such that $Q\rho(b)= 
\rho(b^\vee)^\prime Q$ for all $b \in B_0$. Indeed, by Schur's Lemma, $Q$ 
will be a scalar multiple of $P$ and so $Q^\prime=\eta Q$, with the same 
$\eta$ as before. Now our given $P$ defines a bilinear form $(\;,\;) \colon 
E \times E \rightarrow K$; the fact that $P\rho(b)=\rho(b^\vee)^\prime P$ 
for all $b \in B_0$ means that $(\;,\;)$ is $\cH$-invariant. Thus, we have 
already shown that if $E \cong \hat{E}$, then there exists a non-degenerate 
$\cH$-invariant bilinear form on $E$ which is either symmetric or 
alternating. (Conversely, if such a  bilinear form exists, then $E \cong 
\hat{E}$; see Remark~\ref{rem1}.) It remains to see how $\eta$ is determined.

For this purpose, let $U \in M_d(K)$ be any matrix and define 
\[ Q_U:=\sum_{b \in B_0} \rho(b)^\prime\, U\,\rho(b)=\sum_{b\in B_0}
\hat{\rho}(b^\vee)\,U \,\rho(b).\] 
The second equality shows that $Q_U \rho(b)=\rho(b^\vee)^\prime Q_U$ for 
all $b \in B_0$; see \cite[7.1.10]{gepf}. Hence, as we just remarked, we must
have $Q_U^\prime=\eta Q_U$ and so 
\[ \sum_{1\leq i,j\leq d}\sum_{b\in B_0} \rho_{il}(b)\,u_{ij}\,\rho_{jk}(b)=
\eta\sum_{1\leq i,j\leq d}\sum_{b\in B_0}\rho_{ik}(b)\,u_{ij}\,\rho_{jl}(b)\] 
for all $k,l\in \{1,\ldots,d\}$, where we write $U=(u_{ij})$. Now take 
$U$ to be the matrix with coefficient $1$ at position $(k,l)$ and 
coefficient $0$, otherwise. Then we obtain
\[ \sum_{b \in B_0} \rho_{kl}(b)\,\rho_{lk}(b)=\eta\sum_{b \in B_0} 
\rho_{kk}(b)\,\rho_{ll}(b) \qquad\mbox{for fixed $k,l\in\{1,\ldots,d\}$}.\] 
Summing over all $k,l$ yields 
\[ \sum_{b \in B_0} \chi_E(b^2)=\eta\sum_{b \in B_0} \chi_E(b)^2.\]
Finally, since $E\cong\hat{E}$, we have $\chi_E(b)=\chi_E(b^\vee)$. Hence, 
the right hand side of the above identity equals $\eta\sum_{b \in B_0} 
\chi_E(b)\,\chi_E(b^\vee)$ which, by the orthogonality relations for the 
irreducible characters of $\cH$ (see \cite[7.2.4]{gepf}), equals $\eta\, 
c_E \dim E$. Thus, $\nu_E= \eta=\pm 1$, as required. 

Once the above statements are proved, it follows that for any $E \in
\Irr(\cH)$ we have $\nu_E\in \{0, \pm 1\}$ and the equivalences in 
(a), (b), (c) hold.
\end{proof}

In the standard example where $\cH=\C[G]$ for a finite group $G$, we have 
$c_E=|G|/\dim E$ for all $E \in \Irr(\cH)$ (see \cite[7.2.5]{gepf}). Hence, 
in this case, the formula for $\nu_E$ in Proposition~\ref{prop21} indeed 
is the classical formula for the Frobenius--Schur indicator. 

\begin{prop} \label{prop22} Assume that there exists a $\dagger$-symmetric 
basis $B_0$ of $\cH$. Then 
\[ \operatorname{trace}(\dagger \colon \cH \rightarrow \cH)
=\sum_{E \in \Irr(\cH)} \nu_E\, \dim E.\]
In particular, if $B_0=B_0^\vee$, then 
\[ |\{b \in B_0 \mid b^\vee=b\}|=\sum_{E \in \Irr(\cH)} \nu_E\, \dim E.\]
\end{prop}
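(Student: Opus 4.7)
The plan is to compute $\operatorname{trace}(\dagger)$ directly via the basis $B_0$ and its dual, then to massage the resulting expression so that Remark~\ref{rem1}(a) and the definition of $\nu_E$ from Proposition~\ref{prop21} bring it into the desired form.

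The key tool is the standard dual-basis formula for traces in a symmetric algebra: for any $K$-linear endomorphism $\phi\colon\cH\to\cH$,
\[\operatorname{trace}(\phi)=\sum_{b\in B_0}\tau\bigl(\phi(b)\,b^\vee\bigr).\]
This follows because the coefficient of $b$ in the expansion $h=\sum_{b'\in B_0}\alpha_{b'}\,b'$ is given by $\alpha_b=\tau(h\,b^\vee)$, so summing the diagonal matrix entries of $\phi$ over $b\in B_0$ yields its trace. Applying this with $\phi=\dagger$ and $b^\dagger=b^\vee$ gives $\operatorname{trace}(\dagger)=\sum_{b\in B_0}\tau\bigl((b^\vee)^2\bigr)=\sum_{b\in B_0}\tau\bigl((b^\dagger)^2\bigr)$. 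Because $\dagger$ is an anti-involution, $(b^\dagger)^2=(b^2)^\dagger$, and Remark~\ref{rem1}(a)---whose hypothesis is met since $B_0$ is $\dagger$-symmetric---then gives $\tau((b^2)^\dagger)=\tau(b^2)$. Therefore $\operatorname{trace}(\dagger)=\sum_{b\in B_0}\tau(b^2)$.

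Next I would substitute the character expansion $\tau=\sum_{E\in\Irr(\cH)} c_E^{-1}\chi_E$ and interchange summations, obtaining
\[\operatorname{trace}(\dagger)=\sum_{E\in\Irr(\cH)} c_E^{-1}\sum_{b\in B_0}\chi_E(b^2).\]
By the very definition of $\nu_E$ in Proposition~\ref{prop21}, the inner sum equals $c_E(\dim E)\nu_E$, and everything collapses to $\sum_{E\in\Irr(\cH)}\nu_E\dim E$, proving the first identity. For the ``in particular'' statement, if $B_0=B_0^\vee$ then $b\mapsto b^\dagger=b^\vee$ is a permutation of $B_0$, so $\dagger$ is represented by a permutation matrix in this basis, and its trace is the number of fixed points $|\{b\in B_0\mid b^\vee=b\}|$; combining this with the first identity yields the second.

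I do not anticipate a genuine obstacle: once the dual-basis trace formula is in hand, the argument is essentially bookkeeping. The only slightly delicate point is the step $\tau((b^\vee)^2)=\tau(b^2)$, which uses the $\dagger$-symmetric hypothesis through Remark~\ref{rem1}(a); this symmetrisation is exactly what converts the asymmetric expression coming out of the trace formula into the symmetric one needed to invoke $\nu_E$.
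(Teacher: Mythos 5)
Your proof is correct, and it takes a genuinely different and shorter route than the paper's. The paper computes $\operatorname{trace}(\dagger)$ by passing to the Wedderburn basis $\{e_{ij}^E\}$: one works out $(e_{ij}^E)^\dagger$ via the intertwiner $P$ with $P'=\nu_E P$ from Proposition~\ref{prop21}, and extracts the diagonal contribution of each simple block as $\nu_E \dim E$. Your argument instead stays in the given basis $B_0$ and applies the dual-basis trace formula $\operatorname{trace}(\phi)=\sum_{b\in B_0}\tau(\phi(b)\,b^\vee)$ directly to $\phi=\dagger$, reducing everything to $\sum_{b}\tau(b^2)$ and then reading off $\sum_E\nu_E\dim E$ from the character decomposition of $\tau$ and the \emph{definition} of $\nu_E$. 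The steps $(b^\dagger)^2=(b^2)^\dagger$ (anti-involution) and $\tau((b^2)^\dagger)=\tau(b^2)$ (Remark~\ref{rem1}(a)) are exactly the right massaging, and the ``in particular'' clause is immediate since $\dagger$ is then a permutation of $B_0$. What your approach buys is brevity and the fact that it never needs Schur's lemma, the intertwiner $P$, or the matrix units $e_{ij}^E$ — only the defining formula for $\nu_E$. What the paper's approach buys is an explicit block-by-block decomposition of $\operatorname{trace}(\dagger)$, showing directly that the contribution from each self-dual $E$ is $\nu_E\dim E$ and from each non-self-dual $E$ is zero; this makes the link to Proposition~\ref{prop21} structurally transparent rather than formal.
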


\begin{proof}  The second equality certainly follows from the first:
under the given assumption on $B_0$, we have $\mbox{trace}(\dagger)=
|\{b \in B_0 \mid b^\vee=b\}|$.
In order to prove the first equality, we compute the trace of $\dagger$
with respect to a basis of $\cH$ arising from the Wedderburn decomposition. 
Let $E \in \Irr(\cH)$. Choosing a basis of $E$, we obtain a corresponding 
matrix representation $\rho \colon \cH \rightarrow M_d(K)$ where 
$d=\dim E$. We set 
\[ e_{ij}^E=\frac{1}{c_E}\sum_{b \in B_0} \rho_{ji}(b^\vee)\,b
\qquad \mbox{for $i,j \in \{1,\ldots,d\}$}.\]
Then, by \cite[7.2.7]{gepf}, the matrix $\rho(e_{ij}^E)$ has 
coefficient $1$ at position $(i,j)$ and coefficient $0$, otherwise; 
furthermore, $e_{ij}^E$ acts as zero on any simple $\cH$-module which
is not isomorphic to $E$. The elements 
\[ \{ e_{ij}^E \mid E \in \Irr(\cH), \; 1 \leq i,j \leq \dim E\}\]
form a $K$-basis of $\cH$. We shall now compute the trace of $\dagger$ 
with respect to this basis. First note that, since the dual basis of 
$B_0^\vee$ is $B_0$ and since $e_{ij}^E$ is independent of the choice of 
the basis of $\cH$ (see \cite[\S 7.2]{gepf}), we have 
\[ e_{ij}^E=\frac{1}{c_E}\sum_{b \in B_0}\rho_{ji}(b^\vee) b=
\frac{1}{c_E} \sum_{b \in B_0}\rho_{ji}(b)b^\vee\]
and so 
\[ (e_{ij}^E)^\dagger=\frac{1}{c_E} \sum_{b \in B_0} \rho_{ji}(b^\vee)
b=\frac{1}{c_E}\sum_{b \in B_0} \hat{\rho}_{ij}(b^\vee)b=
e_{ji}^{\hat{E}},\]
where we use the fact that $c_E=c_{\hat{E}}$; see Remark~\ref{rem1}.
This already shows that those $e_{ij}^E$ where $E \not\cong\hat{E}$
will not contribute to the trace of $\dagger$. So let us now assume that
$E \cong \hat{E}$. Let $d=\dim E$. Then there exists an invertible 
matrix $P\in M_d(K)$ such that $P\rho(b)=\rho(b^\vee)^\prime P$ for all 
$b \in B_0$. Write $P=(p_{ij})$ and $P^{-1}=(\tilde{p}_{ij})$. Then we have 
\[ (e_{ij}^E)^\dagger=\frac{1}{c_E}\sum_{b \in B_0}\rho_{ji}(b)\,b=
\frac{1}{c_E}\sum_{b \in B_0}\sum_{1\leq k,l\leq d} \tilde{p}_{jk}\,
p_{li}\, \rho_{lk}(b^\vee)\,b=\sum_{1\leq k,l\leq d} 
\tilde{p}_{jk}\,p_{li}\, e_{kl}^E.\]
The coefficient of $e_{ij}^E$ in the expression on the right hand side 
is $\tilde{p}_{ji}p_{ji}$. The contribution to the trace of $\dagger$ 
from basis vectors corresponding to $E$ will be the sum of all these 
terms. Now, we have $P^\prime=\nu_E P$; see the proof of 
Proposition~\ref{prop21}. Hence, the contribution from $E$ is 
\[ \sum_{1\leq i,j\leq d} \tilde{p}_{ji}p_{ji}=\nu_E \sum_{1\leq i,j
\leq d} \tilde{p}_{ij}p_{ji}=\nu_E\,\mbox{trace}(P^{-1}P)=\nu_E \dim E.\]
Consequently, we have $\mbox{trace}(\dagger)=\sum_E \nu_E \dim E$ where 
the sum runs over all $E \in \Irr(\cH)$ such that $E \cong \hat{E}$. 
Since $\nu_E=0$ for all $E\in \Irr(\cH)$ such that $E \not\cong\hat{E}$, 
this yields the desired formula.
\end{proof}

%We just remark that the proof shows that it would actually be enough to 
%assume that $\tau(b^2)=0$ for all $b \in B_0$ such that $b^\vee\neq b$.

\begin{exmp}  \label{expreal} Let $B_0$ be a $\dagger$-symmetric basis
of $\cH$ and assume that $K \subseteq\R$. We claim that then $\nu_E=1$ 
for all $E \in \Irr(\cH)$. To see this, we adapt the classical argument 
for finite groups. Let $E \in \Irr(\cH)$. Choosing a basis of $E$, we 
obtain a corresponding matrix representation $\rho \colon \cH 
\rightarrow M_d(K)$ where $d=\dim E$. We set 
\[ Q:=\sum_{b \in B_0} \rho(b)^\prime\, \rho(b)=\sum_{b\in B_0}
\hat{\rho}(b^\vee)\,\rho(b).\] 
Clearly, $Q$ is symmetric. As in the proof of Proposition~\ref{prop21}, 
the second equality shows that $Q\rho(b)=\hat{\rho}(b)Q$ for all 
$b \in B_0$, so $Q$ defines a symmetric, $\cH$-invariant bilinear
form on $E$. Now, the diagonal coefficients of $Q$ are sums of squares 
of elements of $K$, at least some of which are non-zero (since $\rho(b)
\neq 0$ for at least some $b \in B_0$). Hence, since $K \subseteq \R$, 
these diagonal coefficients are non-zero and so $Q \neq 0$. By Schur's Lemma,
$Q$ is invertible. Thus, we are in case (b) of Proposition~\ref{prop21}. 
\end{exmp} 

Finally, we remark that there is an extensive literature on further 
generalisations of the Frobenius--Schur indicator, but usually this is done 
in the framework of Hopf algebras; see, for example, Guralnick--Montgomery 
\cite{gumo} and the references there. 

%%%%%%%%%%%%%%%%%%%%%%%%%%%%%%%%%%%%%%%%%%%%%%%%%%%%%%%%%%%%%%%%%%%%%%%
\section{The ring $\tilde{J}$} \label{sec2}
We shall now apply the results of the previous section to cells in finite 
Coxeter groups. Let $W$ be a finite Coxeter group and $S$ be a set of simple 
reflections in $W$. We fix a weight function $L \colon W \rightarrow \Z$ 
in the sense of Lusztig \cite{Lusztig03}, where we assume that $L(s) \geq 0$
for all $s \in S$. Using the Kazhdan--Lusztig basis in the generic 
Iwahori--Hecke algebra associated with $W,L$, we can define partitions of 
$W$ into left, right and two-sided cells. (Note that these notions depend
on $L$). 

The key tool to study these cells will be the theory around Lusztig's 
ring $J$, originally introduced in \cite{Lu2} in the equal parameter case. 
Subsequently, Lusztig \cite{Lusztig03} extended the theory to the general 
case, assuming that certain conjectural properties hold; see 
{\bf P1}--{\bf P15} in \cite[14.2]{Lusztig03}. In order to avoid the 
dependence on these conjectural properties, we shall work with a version
of Lusztig's ring introduced in \cite{myedin}. Let $\tilde{J}$ denote this
new version of $J$. The principal advantage of $\tilde{J}$ is that it can be 
constructed without any assumption on $W,L$. On the other hand, the results 
that are known about $\tilde{J}$ are not as strong as those for $J$ but, as 
we shall see, they are sufficient to deduce Theorem~\ref{propleft}. (See 
Remark~\ref{finrem} below for some comments on the relation between $J$ 
and $\tilde{J}$.)

We now recall the basic facts about the construction of $\tilde{J}$;
we use \cite[\S 1.5]{geja} as a reference. Let $K \subseteq \C$ be any 
field which is a splitting field for $W$. Let $\Irr_K(W)$ denote the set 
of simple $K[W]$-modules (up to isomorphism) and write 
\[ \Irr_K(W)=\{E^\lambda \mid \lambda \in \Lambda\} \qquad \mbox{(for 
some finite indexing set $\Lambda$)}.\]
For each $\lambda \in \Lambda$ let $M(\lambda)$ be a basis of $E^\lambda$. 
Then, by the construction in \cite[\S 1.4]{geja}, we obtain 
corresponding {\em leading matrix coefficients} 
\[ c_{w,\lambda}^{\fs\ft} \in K \qquad \mbox{where $w \in W$, $\lambda
\in \Lambda$ and $\fs,\ft \in M(\lambda)$}.\]
(The construction uses the generic Iwahori--Hecke algebra associated with 
$W,L$ and, hence, the above numbers depend on $L$.) For $x,y,z\in W$, we set 
\[ \tilde{\gamma}_{x,y,z}=\sum_{\lambda\in \Lambda} \sum_{\fs,\ft,\fu
\in M(\lambda)} f_\lambda^{-1} \,c_{x,\lambda}^{\fs\ft}\, c_{y,\lambda}^{\ft
\fu}\,c_{z,\lambda}^{\fu\fs},\]
where each $f_\lambda \in K$ is a non-zero element obtained from the 
corresponding Schur element of the generic Iwahori--Hecke algebras 
associated with $W,L$ (see \cite[1.3.1]{geja}). Now $\tilde{J}$ is an 
associative algebra over $K$, with a basis $\{t_w \mid w \in W\}$. The 
multiplication is given by 
\[ t_xt_y=\sum_{z \in W} \tilde{\gamma}_{x,y,z} t_{z^{-1}} \qquad \mbox{for
$x,y \in W$}.\]
There is an identity element given by $1_{\tilde{J}}=\sum_{w \in W}
\tilde{n}_wt_w$ where
\[\tilde{n}_w=\sum_{\lambda \in \Lambda} \sum_{\fs \in M(\lambda)}
f_\lambda^{-1}\, c_{w,\lambda}^{\fs\fs}\qquad\mbox{for all $w\in W$}.\] 
The algebra $\tilde{J}$ is symmetric with trace form $\tau \colon\tilde{J} 
\rightarrow K$, where  $\tau(t_w)=\tilde{n}_w$ for all $w \in W$. We also 
note that $\tilde{n}_w= \tilde{n}_{w^{-1}}$ and, hence, $\tau(t_w)=
\tau(t_{w^{-1}})$ for all $w \in W$ (see  \cite[1.5.3(c)]{geja}). 
Furthermore, the map
\[ \dagger \colon \tilde{J} \rightarrow \tilde{J}, \qquad t_w \mapsto 
t_{w^{-1}},\]
is an anti-involution of $\tilde{J}$ and $B_0=\{t_w \mid w \in W\}$ is a 
$\dagger$-symmetric basis of $\tilde{J}$. Finally, $\tilde{J}$ 
is split semisimple and we have a corresponding labelling 
\[ \Irr(\tilde{J})=\{\tilde{E}^\lambda \mid \lambda \in \Lambda\} \quad
\mbox{such that} \quad \dim E^\lambda=\dim \tilde{E}^\lambda \quad
\mbox{for all $\lambda\in \Lambda$}.\] 
We have $\tau=\sum_{\lambda \in \Lambda} f_\lambda^{-1} \chi_{\tilde{E}}$, 
hence the numbers $f_\lambda$ ($\lambda \in \Lambda$) are the Schur 
elements of $\tilde{J}$. (For all these facts, see \cite[\S 1.5]{geja}.) 

Now, by imitating the original definitions of Kazhdan and Lusztig 
\cite{KaLu}, one can define partitions of $W$ into left, right and two-sided 
``$\tilde{J}$-cells''; see \cite[\S 1.6]{geja}. (If we just say ``left 
cell'', ``right cell'' or ``two-sided cell'', then this is always meant to 
be a cell in the sense of Kazhdan and Lusztig, with respect to the given 
weight function~$L$.) Here are some of the essential properties that we 
shall need:
\begin{itemize}
\item[{\bf (J1)}] If $\tilde{\gamma}_{x,y,z}\neq 0$, then $x,y^{-1}$ 
belong to the same left $\tilde{J}$-cell, $y,z^{-1}$ belong to the same left 
$\tilde{J}$-cell and $z,x^{-1}$ belong to the same left $\tilde{J}$-cell.
(See \cite[1.6.4]{geja}.)
\item[{\bf (J2)}] For $\lambda \in \Lambda$, the set of all $w \in W$ 
such that $c_{w,\lambda}^{\fs\ft}\neq 0$ for some $\fs,\ft \in M(\lambda)$ 
is contained in a two-sided $\tilde{J}$-cell. (See \cite[1.6.11]{geja}.)
\item[{\bf (J3)}] If $\Gamma$ is a left cell of $W$, then $\Gamma$ is a 
union of left $\tilde{J}$-cells. A similar statement holds for right cells 
and two-sided cells. (See \cite[2.1.20]{geja}.)
\end{itemize}
Now let $\fC$ be a left $\tilde{J}$-cell or, slightly more generally, a 
union of left $\tilde{J}$-cells. Then {\bf (J1)} implies that 
\[ [\fC]_{\tilde{J}}=\langle t_x\mid x\in\fC\rangle_K\subseteq\tilde{J}\]
is a left ideal in $\tilde{J}$ and, thus, can be viewed as a left 
$\tilde{J}$-module. For $\lambda \in \Lambda$, denote by $\tilde{m}(\fC,
\lambda)$ the multiciplity of $\tilde{E}^\lambda \in \Irr(\tilde{J})$ as 
an irreducible constituent of $[\fC]_{\tilde{J}}$. Clearly, if $\fC_1,
\ldots,\fC_r$ are the left $\tilde{J}$-cells contained in $\fC$, then 
\[ [\fC]_{\tilde{J}}=\bigoplus_{1\leq i \leq r} [\fC_i]_{\tilde{J}} \quad 
\mbox{and}\quad \tilde{m}(\fC,\lambda)=\sum_{1\leq i \leq r} 
\tilde{m}(\fC_i, \lambda) \quad \mbox{for all $\lambda \in \Lambda$}.\]

\begin{lem} \label{copylus} Let $\fC,\fC'$ be left
$\tilde{J}$-cells of $W$.
\begin{itemize}
\item[(a)] We have $\Hom_{\tilde{J}}([\fC]_{\tilde{J}},[\fC']_{\tilde{J}})
=\{0\}$ unless $\fC,\fC'$ are contained in the same two-sided 
$\tilde{J}$-cell.
\item[(b)] In general, we have $\dim \Hom_{\tilde{J}}([\fC]_{\tilde{J}}, 
[\fC']_{\tilde{J}})=|\fC' \cap (\fC')^{-1}|$; in particular, $\fC\cap 
(\fC')^{-1}=\varnothing$ unless $\fC,\fC'$ are contained in the same 
two-sided $\tilde{J}$-cell.
\item[(c)] If $\fC=\fC'$, then the subspace $\tilde{J}_\fC:=\langle t_w 
\mid w \in \fC \cap  \fC^{-1} \rangle_K \subseteq \tilde{J}$ is a 
subalgebra isomorphic to $\End_{\tilde{J}}([\fC]_{\tilde{J}})$. Furthermore,
$\tilde{J}_{\fC}$ is split semisimple and has identity element
$1_{\fC}:=\sum_{w \in \fC \cap \fC^{-1}} \tilde{n}_w\,t_w$.
\end{itemize}
\end{lem}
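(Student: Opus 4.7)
The plan is to exploit the canonical algebra isomorphism $R\colon\tilde{J}^{op}\to\End_{\tilde{J}}(\tilde{J})$, $t\mapsto R_t$ (right multiplication), together with the decomposition of $\tilde{J}$ as a left module $\tilde{J}=\bigoplus_{\fC}[\fC]_{\tilde{J}}$ indexed by left $\tilde{J}$-cells. This induces a block decomposition
\[ \End_{\tilde{J}}(\tilde{J})\;=\;\bigoplus_{\fC_1,\fC_2}\Hom_{\tilde{J}}([\fC_1]_{\tilde{J}},[\fC_2]_{\tilde{J}}). \]
The heart of the argument is to locate each basis vector $R_{t_u}$ in this decomposition. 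Applying \textbf{(J1)} to $t_yt_u=\sum_z\tilde{\gamma}_{y,u,z}t_{z^{-1}}$ for $y\in\fC_1$, one sees that $R_{t_u}$ vanishes on $[\fC_1]_{\tilde{J}}$ unless $u^{-1}\in\fC_1$, and that its image lies in $[\fC_2]_{\tilde{J}}$, where $\fC_2$ is the left $\tilde{J}$-cell of $u$. Hence $R_{t_u}$ occupies exactly the $(\fC_{u^{-1}},\fC_u)$-block, and since $R$ is a $K$-linear bijection, we obtain the identification
\[ \Hom_{\tilde{J}}([\fC]_{\tilde{J}},[\fC']_{\tilde{J}})\;=\;\langle R_{t_u}\mid u\in\fC^{-1}\cap\fC'\rangle_K. \]

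Part (b) follows at once: $\dim\Hom=|\fC^{-1}\cap\fC'|=|\fC\cap(\fC')^{-1}|$. Part (a) follows because any $u\in\fC^{-1}\cap\fC'$ satisfies $u^{-1}\in\fC$ and $u\in\fC'$, while $w\mapsto w^{-1}$ preserves two-sided $\tilde{J}$-cells, so $\fC$ and $\fC'$ are forced into a common two-sided cell. (An alternative Peirce-style proof of (a) uses that $[\mathfrak{B}]_{\tilde{J}}$ is a two-sided ideal with its own identity element for each two-sided $\tilde{J}$-cell $\mathfrak{B}$.)

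For (c), specialising $\fC=\fC'$ turns the above identification into an injective ring homomorphism $\tilde{J}_\fC^{op}\to\End_{\tilde{J}}([\fC]_{\tilde{J}})$, $t\mapsto R_t|_{[\fC]_{\tilde{J}}}$, whose image has the full dimension supplied by (b) and is therefore all of $\End_{\tilde{J}}([\fC]_{\tilde{J}})$. Composing with the anti-involution $\dagger$, which stabilises $\tilde{J}_\fC$ because $\fC\cap\fC^{-1}$ is closed under inversion, upgrades this to the desired iso $\tilde{J}_\fC\cong\End_{\tilde{J}}([\fC]_{\tilde{J}})$. Closure of $\tilde{J}_\fC$ under multiplication may also be verified directly by chasing the three constraints of \textbf{(J1)} on $\tilde{\gamma}_{x,y,z}$ for $x,y\in\fC\cap\fC^{-1}$. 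Split semisimplicity is inherited from that of $\End_{\tilde{J}}([\fC]_{\tilde{J}})$, which is split semisimple because $\tilde{J}$ is.

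The one genuinely calculational step, and the main obstacle, is matching the abstract unit of $\tilde{J}_\fC$ with the explicit formula $1_\fC=\sum_{w\in\fC\cap\fC^{-1}}\tilde{n}_wt_w$. I would use the symmetric-algebra structure: since $\{t_w\}$ is $\dagger$-symmetric, its $\tau$-dual basis is $\{t_{w^{-1}}\}$, which yields the orthogonality $\tau(t_at_b)=\delta_{ab,1}$. Writing the unit abstractly as $\mathbf{1}=\sum_{v\in\fC\cap\fC^{-1}}\alpha_vt_v$ and noting that $t_{u^{-1}}\in\tilde{J}_\fC$ for $u\in\fC\cap\fC^{-1}$, the identity axiom $\mathbf{1}\cdot t_{u^{-1}}=t_{u^{-1}}$ combined with this orthogonality gives
\[ \alpha_u\;=\;\tau\bigl(\mathbf{1}\cdot t_{u^{-1}}\bigr)\;=\;\tau(t_{u^{-1}})\;=\;\tilde{n}_{u^{-1}}\;=\;\tilde{n}_u, \]
so $\mathbf{1}=1_\fC$ as claimed.
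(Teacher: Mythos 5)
Your approach to parts (b) and (c) is correct and in fact cleaner than the paper's. Rather than exhibit the maps $\varphi_y\colon t_x\mapsto t_xt_{y^{-1}}$, prove their linear independence via the trace form, and then close the inequality by a global counting argument (which is what the paper does, following Lusztig's \cite[12.15]{LuBook}), you start from the canonical isomorphism $\tilde{J}^{op}\cong\End_{\tilde{J}}(\tilde{J})$ and the module decomposition $\tilde{J}=\bigoplus_\fC[\fC]_{\tilde{J}}$. Since each $R_{t_u}$ is nonzero (it sends $1_{\tilde{J}}$ to $t_u$) and, by {\bf (J1)}, is supported in the single block $\Hom_{\tilde{J}}([\fC_{u^{-1}}]_{\tilde{J}},[\fC_u]_{\tilde{J}})$, the basis $\{R_{t_u}\}$ automatically restricts to a basis of each block, so the dimension formula falls out with no counting argument and no separate linear-independence check. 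The passage from $\tilde{J}_\fC^{op}\cong\End_{\tilde{J}}([\fC]_{\tilde{J}})$ to $\tilde{J}_\fC\cong\End_{\tilde{J}}([\fC]_{\tilde{J}})$ via $\dagger$ is the same move the paper makes when it observes that $\varphi(h)$ is right multiplication by $h^\dagger$, and your computation of the unit $1_\fC$ is essentially identical to the paper's.

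The one genuine gap is in part (a). You deduce (a) from (b) by arguing that if $u\in\fC^{-1}\cap\fC'$ then $u^{-1}\in\fC$ and $u\in\fC'$ and ``$w\mapsto w^{-1}$ preserves two-sided $\tilde{J}$-cells.'' For this to force $\fC$ and $\fC'$ into a common two-sided $\tilde{J}$-cell, you need the pointwise statement that $w$ and $w^{-1}$ always lie in the \emph{same} two-sided $\tilde{J}$-cell --- not merely that inversion permutes two-sided $\tilde{J}$-cells. That fact is not among the quoted properties {\bf (J1)}--{\bf (J3)}, and it is also what your parenthetical ``Peirce-style'' alternative tacitly uses: to show $[\mathfrak{B}]_{\tilde{J}}$ is a \emph{right} ideal from {\bf (J1)} alone, one is led from $\tilde{\gamma}_{y,a,z}\neq 0$ to ``$z$ lies in the left $\tilde{J}$-cell of $y^{-1}$,'' and concluding $z^{-1}\in\mathfrak{B}_y$ again needs $y\sim_{LR}y^{-1}$ (or at least that two-sided $\tilde{J}$-cells are stable under inversion). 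The paper sidesteps all of this: its proof of (a) goes through the multiplicities $\tilde{m}(\fC,\lambda)$, the nonvanishing of a \emph{diagonal} leading coefficient $c_{w,\lambda}^{\fs\fs}$ guaranteed by \cite[1.8.1]{geja}, and then {\bf (J2)} directly, never invoking inversion. So either you should prove $w\sim_{LR}w^{-1}$ for $\tilde{J}$-cells (which requires more input from \cite{geja} than you have quoted), or you should replace your argument for (a) with the paper's, which is logically independent of (b).
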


\begin{proof} (a) Assume that $\Hom_{\tilde{J}}([\fC]_{\tilde{J}},
[\fC']_{\tilde{J}})\neq\{0\}$. This means that there is some $\lambda \in 
\Lambda$ such that $\tilde{m}(\fC,\lambda)>0$ and $\tilde{m}(\fC',\lambda)
>0$. By \cite[1.8.1]{geja}, there exist $w \in \fC$ and $w' \in \fC'$ such 
that $c_{w,\lambda}^{\fs\fs}\neq 0$ and $c_{w',\lambda}^{\ft\ft}\neq 0$ 
for some $\fs,\ft \in M(\lambda)$. By {\bf (J2)}, $w$ and $w'$ are 
contained in the same two-sided $\tilde{J}$-cell. Consequently, $\fC$ 
and $\fC'$ must be contained in the same two-sided $\tilde{J}$-cell.

(b) This is modelled on the argument of Lusztig \cite[12.15]{LuBook}. 
First we show that
\begin{equation*}
\dim \Hom_{\tilde{J}}([\fC]_{\tilde{J}}, [\fC']_{\tilde{J}})\geq 
|\fC \cap (\fC')^{-1}|.\tag{$*$}
\end{equation*}
If $\fC\cap (\fC')^{-1}=\varnothing$, this is clear. Now assume that 
$\fC\cap (\fC')^{-1}\neq \varnothing$. Let $y \in \fC\cap (\fC')^{-1}$ and 
$x \in \fC$. Then $t_x t_{y^{-1}}=\sum_{z \in W} \tilde{\gamma}_{x,y^{-1},z} 
t_{z^{-1}}$. If $\tilde{\gamma}_{x,y^{-1},z} \neq 0$, then $y^{-1},z^{-1}$ 
belong to the same left $\tilde{J}$-cell and so $z^{-1} \in \fC'$; see 
{\bf (J1)}. It follows that we have a well-defined left $\tilde{J}$-module 
homomorphism
\[ \varphi_y \colon [\fC]_{\tilde{J}} \rightarrow [\fC']_{\tilde{J}}, 
\qquad t_x \mapsto t_xt_{y^{-1}} \;(x \in \fC).\]
We claim that the collection of maps $\{\varphi_y \mid y \in \fC\cap 
(\fC')^{-1}\}$ is linearly independent in $\Hom_K([\fC]_{\tilde{J}},
[\fC']_{\tilde{J}})$. Indeed, assume that 
\[ \sum_{y \in \fC\cap (\fC')^{-1}} \alpha_y \,\varphi_y=0 \qquad 
\mbox{where $\alpha_y \in K$ for all $y \in \fC\cap (\fC')^{-1}$}.\]
Let $x \in \fC \cap (\fC')^{-1}$. Applying the above linear 
combination to $t_x$ and then evaluating the trace form $\tau$ on the 
resulting expression, we obtain
\[ 0=\sum_{y \in \fC\cap (\fC')^{-1}} \alpha_y \tau(\varphi_y(t_x))=
\sum_{y \in \fC\cap (\fC')^{-1}} \alpha_y \tau(t_xt_{y^{-1}})=
\alpha_x,\]
using the fact that $B_0=\{t_w \mid w \in W\}$ is a $\dagger$-symmetric 
basis of $\tilde{J}$. Thus, we have $\alpha_x=0$ for all $x\in \fC\cap 
(\fC')^{-1}$, as required. This certainly implies that ($*$) holds.
We can then complete the proof by a counting argument, exactly as in
\cite[12.15]{LuBook}. In particular, this shows that 
\[ \{\varphi_y \mid y \in \fC \cap (\fC')^{-1}\} \mbox{ is a vector 
space basis of $\Hom_{\tilde{J}}([\fC]_{\tilde{J}},[\fC']_{\tilde{J}})$}.\]

(c) Let $\fC=\fC'$. By (b), we have $\fC\cap \fC^{-1}\neq \varnothing$. 
Let $x,y \in \fC\cap \fC^{-1}$ and write $t_xt_y=\sum_{z \in W} 
\tilde{\gamma}_{x,y,z}t_{z^{-1}}$. If $\tilde{\gamma}_{x,y,z}\neq 0$ 
then $y,z^{-1}$ belong to the same left $\tilde{J}$-cell and $z,x^{-1}$ 
belong to the same left $\tilde{J}$-cell; see again {\bf (J1)}. Thus, we 
must have $z \in \fC \cap \fC^{-1}$. This shows that $\tilde{J}_\fC$ is a 
subalgebra of $\tilde{J}$. Using now the construction in the proof of 
(b), we obtain an isomorphism of vector spaces
\[\varphi\colon\tilde{J}_\fC\rightarrow\End_{\tilde{J}}([\fC]_{\tilde{J}}),
\qquad t_y \mapsto \varphi_y \quad (y \in \fC \cap \fC^{-1}).\]
We note that, for any $h \in \tilde{J}_{\fC}$, the map $\varphi(h)$ is 
given by right multiplication with $h^\dagger$. This certainly implies that
$\varphi$ is an algebra homomorphism. 

Finally, being isomorphic to the endomorphism algebra of a module of 
a split semisimple algebra, $\tilde{J}_\fC$ itself has an identity 
element and is split semisimple. Let $1_{\fC}$ be the identity
element and write $1_{\fC}= \sum_{w \in \fC\cap \fC^{-1}} \alpha_w\, 
t_w$ where $\alpha_w \in K$. If $x \in \fC\cap \fC^{-1}$, then $t_x 
\in \tilde{J}_{\fC}$ and so 
\[\tilde{n}_x=\tau(t_{x^{-1}})=\tau(t_{x^{-1}}1_{\fC})=\sum_{w \in 
\fC\cap \fC^{-1}} \alpha_w\, \tau(t_{x^{-1}}t_w)=\alpha_x.\]
Thus, $1_{\fC}$ has the required expression.
\end{proof}

\begin{rem} \label{copylus1} Let $\fC$ be a left $\tilde{J}$-cell. Recall
that, by definition, the left $\tilde{J}$-module $[\fC]_{\tilde{J}}=
\langle t_w \mid w \in \fC\rangle_k$ is a left ideal in $\tilde{J}$. By 
Lemma~\ref{copylus}(c), the element $1_{\fC}$ is an idempotent in 
$\tilde{J}$, and it is contained in $[\fC]_{\tilde{J}}$. 
In fact, we claim that 
\[ [\fC]_{\tilde{J}}=\tilde{J}1_{\fC}.\]
Indeed, since $1_{\fC} \in [\fC]_{\tilde{J}}$, it is clear that
$\tilde{J}1_{\fC} \subseteq [\fC]_{\tilde{J}}$. Conversely, we note that 
right multiplication by $1_{\tilde{J}}$ is the identity element of 
$\End_{\tilde{J}}([\fC]_{\tilde{J}})$; see the proof of 
Lemma~\ref{copylus}(c). Thus, for any $w \in \fC$, we have $t_w=t_w
1_{\fC} \in \tilde{J}1_{\fC}$, as required. 
\end{rem}

For any subset $X \subseteq W$, we denote by $X_{(2)}$ the set of 
involutions in $X$.

\begin{lem} \label{lem31} Let $\fC$ be a union of left $\tilde{J}$-cells of
$W$. Then we have
\[ |\fC_{(2)}|=\sum_{\lambda \in \Lambda} \tilde{m}(\fC,\lambda).\]
\end{lem}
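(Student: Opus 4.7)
The plan is to apply Proposition~\ref{prop22}, combined with Example~\ref{expreal}, to a suitable symmetric subalgebra $\tilde{J}_\fC$ of $\tilde{J}$. The subalgebra is the natural extension to a union of cells of the one appearing in Lemma~\ref{copylus}(c): set
\[
\tilde{J}_\fC := \langle t_w \mid w \in \fC \cap \fC^{-1}\rangle_K.
\]
Property {\bf (J1)}, applied exactly as in the proof of Lemma~\ref{copylus}(c), shows that $\tilde{\gamma}_{x,y,z}=0$ whenever $x,y \in \fC \cap \fC^{-1}$ but $z \notin \fC \cap \fC^{-1}$; thus $\tilde{J}_\fC$ is a subalgebra. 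The involution $\dagger$ preserves it (since $\fC \cap \fC^{-1}$ is stable under inversion), and the restriction of $\tau$ remains non-degenerate because the $\tilde{J}$-dual basis $\{t_{w^{-1}}\}$ stays inside $\tilde{J}_\fC$. Hence $\tilde{J}_\fC$ is a symmetric $K$-algebra with $\dagger$-symmetric basis $B_0=\{t_w \mid w \in \fC \cap \fC^{-1}\}$.

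Next I would identify $\Irr(\tilde{J}_\fC)$ with the constituents of $[\fC]_{\tilde{J}}$ by constructing an algebra isomorphism $\tilde{J}_\fC \cong \End_{\tilde{J}}([\fC]_{\tilde{J}})$ sending $t_y$ to the endomorphism $\varphi_y$ of right multiplication by $t_{y^{-1}}$. Writing $\fC=\fC_1\sqcup\cdots\sqcup\fC_r$ as a disjoint union of left $\tilde{J}$-cells, property {\bf (J1)} ensures that right multiplication by $t_{y^{-1}}$ annihilates $[\fC_k]_{\tilde{J}}$ whenever $y\notin\fC_k$, so each $\varphi_y$ is a well-defined endomorphism of $[\fC]_{\tilde{J}}$. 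The partition $\fC\cap\fC^{-1}=\bigsqcup_{i,j}\fC_i\cap\fC_j^{-1}$, together with Lemma~\ref{copylus}(b) applied to every pair $(\fC_i,\fC_j)$, produces a basis of $\End_{\tilde{J}}([\fC]_{\tilde{J}})$; compatibility with the algebra structure is the same computation as in Lemma~\ref{copylus}(c). Since $\tilde{J}$ is split semisimple, the Wedderburn decomposition then forces $\tilde{J}_\fC \cong \bigoplus_\lambda M_{\tilde{m}(\fC,\lambda)}(K)$, and in particular
\[
\sum_{E\in\Irr(\tilde{J}_\fC)}\dim E = \sum_{\lambda \in \Lambda} \tilde{m}(\fC,\lambda).
\]

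Finally, I would choose $K\subseteq\R$ (permissible since the characters of any finite Coxeter group are realisable over a totally real subfield of $\C$, and the assertion of the lemma is independent of the splitting field used in the construction of $\tilde{J}$). Example~\ref{expreal} applied to $\tilde{J}_\fC$ yields $\nu_E=+1$ for every $E\in\Irr(\tilde{J}_\fC)$, and Proposition~\ref{prop22} gives
\[
|\{b \in B_0 \mid b^\vee = b\}| \;=\; \sum_{E\in\Irr(\tilde{J}_\fC)} \dim E \;=\; \sum_{\lambda\in\Lambda} \tilde{m}(\fC,\lambda).
\]
Since $t_w^\vee=t_w^\dagger=t_{w^{-1}}$, the left-hand side counts precisely those $w\in\fC\cap\fC^{-1}$ with $w=w^{-1}$, namely the elements of $\fC_{(2)}$, which finishes the proof. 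I expect the main obstacle to be the bookkeeping in the middle step: extending the $\End$-isomorphism of Lemma~\ref{copylus}(c) from a single cell to a union of cells and verifying that the $\varphi_y$ coming from distinct pairs $(\fC_i,\fC_j)$ assemble cleanly into a basis of the full endomorphism ring. Once that is in place, the remainder is a direct application of the Frobenius--Schur machinery of Section~\ref{sec1}.
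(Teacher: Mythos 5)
Your proposal is correct and uses exactly the Frobenius--Schur machinery the paper intends: apply Proposition~\ref{prop22} together with Example~\ref{expreal} to the symmetric algebra $\tilde{J}_\fC$, identify $\Irr(\tilde{J}_\fC)$ with the constituents of $[\fC]_{\tilde{J}}$ via the $\End$-isomorphism, and read off the identity. The only real difference from the paper's argument is organisational: the paper first reduces to the case where $\fC$ is a \emph{single} left $\tilde{J}$-cell (since $\fC_{(2)}$ and $\tilde{m}(\fC,\cdot)$ both decompose additively over $\fC=\fC_1\sqcup\cdots\sqcup\fC_r$), and then quotes Lemma~\ref{copylus}(c) verbatim; you instead keep $\fC$ a union and extend the subalgebra/$\End$-isomorphism of Lemma~\ref{copylus}(c) to that setting. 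Your route is sound --- the {\bf (J1)} closure argument goes through word for word for unions, the maps $\varphi_y$ for $y\in\fC_i\cap\fC_j^{-1}$ assemble into a basis of $\bigoplus_{i,j}\Hom_{\tilde{J}}([\fC_i]_{\tilde{J}},[\fC_j]_{\tilde{J}})=\End_{\tilde{J}}([\fC]_{\tilde{J}})$ by Lemma~\ref{copylus}(b), and the dimension count matches $|\fC\cap\fC^{-1}|$ --- but it re-proves part of Lemma~\ref{copylus}(c) in greater generality, whereas the paper's preliminary reduction avoids that and is slightly more economical. In short: same key ingredients, the paper reduces to one cell before invoking them and you do not.
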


\begin{proof} Since $\R$ is known to be a splitting field for $W$ (see 
\cite[6.3.8]{gepf}), we will assume in this proof that $K \subseteq \R$. 
Let $\fC_1,\ldots,\fC_r$ be the left $\tilde{J}$-cells which are contained 
in $\fC$. Then, clearly, $\fC_{(2)}$ is the union of the sets of involutions 
in $\fC_1,\ldots,\fC_r$; furthermore, as already noted above, we have 
$\tilde{m}(\fC, \lambda)=\tilde{m}(\fC_1,\lambda)+ \ldots+\tilde{m}(\fC_r,
\lambda)$ for all $\lambda \in \Lambda$. Thus, it will be sufficient to 
deal with the case where $r=1$ and $\fC=\fC_1$ is just one left 
$\tilde{J}$-cell. In this case, consider the split semisimple algebra 
$\cH:=\tilde{J}_{\fC}$; see Lemma~\ref{copylus}(c). We note that $\dagger$ 
restricts to an anti-involution of $\cH$ which we denote by the same symbol. 
Furthermore, $\tau$ restricts to a trace form on $\cH$ where $B_{0,\fC}=
\{t_w \mid w \in \fC \cap \fC^{-1}\}$ is a $\dagger$-symmetric basis of 
$\tilde{J}_{\fC}$ such that $B_{0,\fC}=B_{0,\fC}^\vee$. Thus, we can apply 
the results in Section~\ref{sec1} to $\cH$. Since $K \subseteq \R$ and 
since $\fC_{(2)} \subseteq \fC \cap \fC^{-1}$, we conclude that 
\[ |\fC_{(2)}|=\sum_{M\in \Irr(\cH)} \dim M \qquad \mbox{(see 
Proposition~\ref{prop22} and Example~\ref{expreal})}.\]
It remains to note that, since $\tilde{J}$ is split semisimple and since 
we have an isomorphism $\cH\cong \End_{\tilde{J}}([\fC]_{\tilde{J}})$, there 
is a bijection between $\Irr(\cH)$ and the set of simple $\tilde{J}$-modules 
which appear as constituents of $[\fC]_{\tilde{J}}$; furthermore, if 
$\tilde{E}^\lambda \in \Irr(\tilde{J})$ is such a constituent, then the 
corresponding simple $\cH$-module has dimension $\tilde{m}(\fC,\lambda)$. 
(This follows from simple facts about Hom functors; see, e.g., 
\cite[4.1.3]{geja}.)
\end{proof}

\begin{cor} \label{cormult} Let $\fC$ be a left $\tilde{J}$-cell.
Then $[\fC]_{\tilde{J}}$ is multiplicity-free if and only if $\fC_{(2)}=
\fC \cap \fC^{-1}$.
\end{cor}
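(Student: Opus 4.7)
The plan is to combine the two counting formulas that are now available: Lemma~\ref{lem31} computes $|\fC_{(2)}|$ in terms of the multiplicities $\tilde{m}(\fC,\lambda)$, while Lemma~\ref{copylus}(b),(c) computes $|\fC \cap \fC^{-1}|$ as the dimension of $\End_{\tilde{J}}([\fC]_{\tilde{J}})$. The corollary then falls out of the elementary inequality $n \leq n^2$ for non-negative integers.

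More concretely, I would first note the trivial containment $\fC_{(2)} \subseteq \fC \cap \fC^{-1}$, so $|\fC_{(2)}| \leq |\fC \cap \fC^{-1}|$, with equality precisely when $\fC_{(2)} = \fC \cap \fC^{-1}$. Next, by Lemma~\ref{lem31} applied to the single left $\tilde{J}$-cell $\fC$,
\[
|\fC_{(2)}| = \sum_{\lambda \in \Lambda} \tilde{m}(\fC,\lambda).
\]
On the other hand, by Lemma~\ref{copylus}(c) we have $\tilde{J}_\fC \cong \End_{\tilde{J}}([\fC]_{\tilde{J}})$, and since $\tilde{J}$ is split semisimple this endomorphism ring is a direct sum of matrix algebras of sizes $\tilde{m}(\fC,\lambda)$. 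Hence
\[
|\fC \cap \fC^{-1}| = \dim \tilde{J}_\fC = \sum_{\lambda \in \Lambda} \tilde{m}(\fC,\lambda)^2.
\]

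Putting these together, the condition $\fC_{(2)} = \fC \cap \fC^{-1}$ is equivalent to $\sum_\lambda \tilde{m}(\fC,\lambda) = \sum_\lambda \tilde{m}(\fC,\lambda)^2$. Since each $\tilde{m}(\fC,\lambda)$ is a non-negative integer, and $n^2 \geq n$ with equality iff $n \in \{0,1\}$, this in turn is equivalent to $\tilde{m}(\fC,\lambda) \in \{0,1\}$ for every $\lambda$, i.e.\ to $[\fC]_{\tilde{J}}$ being multiplicity-free.

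There is no real obstacle here — all the substantive work has already been done in Lemma~\ref{copylus} and Lemma~\ref{lem31}. The only point requiring a small amount of care is recording that $\dim \End_{\tilde{J}}([\fC]_{\tilde{J}}) = \sum_\lambda \tilde{m}(\fC,\lambda)^2$; one can either invoke the Wedderburn decomposition directly or simply reuse the formula $\dim \Hom_{\tilde{J}}([\fC]_{\tilde{J}},[\fC]_{\tilde{J}}) = |\fC \cap \fC^{-1}|$ from Lemma~\ref{copylus}(b) together with the multiplicity decomposition of $[\fC]_{\tilde{J}}$.
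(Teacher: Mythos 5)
Your proposal is correct and follows exactly the paper's own proof: both combine Lemma~\ref{lem31} (giving $|\fC_{(2)}|=\sum_\lambda \tilde{m}(\fC,\lambda)$) with Lemma~\ref{copylus}(b) (giving $|\fC\cap\fC^{-1}|=\sum_\lambda \tilde{m}(\fC,\lambda)^2$), and then use the elementary fact that a non-negative integer $n$ satisfies $n=n^2$ iff $n\in\{0,1\}$. The only cosmetic difference is your optional detour through Lemma~\ref{copylus}(c), which is not needed once (b) is in hand.
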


\begin{proof} Recall that $\fC_{(2)}\subseteq \fC\cap\fC^{-1}$. By 
Lemma~\ref{lem31}, we have $|\fC_{(2)}|= \sum_{\lambda \in \Lambda} 
\tilde{m}(\fC,\lambda)$. On the other hand, by Lemma~\ref{copylus}(b), 
we have $|\fC\cap \fC^{-1}|=\sum_{\lambda \in \Lambda} \tilde{m}(\fC,
\lambda)^2$. Hence, if $[\fC]_{\tilde{J}}$ is multiplicitly-free, then 
$\tilde{m}(\fC,\lambda)=\tilde{m}(\fC,\lambda)^2$ for all $\lambda \in 
\Lambda$ and so $\fC_{(2)}=\fC\cap \fC^{-1}$. On the other hand, if 
$\fC_{(2)}=\fC\cap \fC^{-1}$, then $\sum_{\lambda \in \Lambda} \tilde{m}
(\fC,\lambda)=\sum_{\lambda \in \Lambda} \tilde{m}(\fC,\lambda)^2$ and 
so $\tilde{m}(\fC,\lambda) \in \{0,1\}$ for all $\lambda\in \Lambda$.
\end{proof}

\begin{rem} \label{ll4} Let $\lambda \in \Lambda$ and $w \in W$. By 
\cite[1.5.7]{geja}, we have 
\begin{equation*}
c_{w,\lambda}=\mbox{trace}(t_w,\tilde{E}^\lambda) \qquad \mbox{where} \qquad 
c_{w,\lambda}:=\sum_{\fs \in M(\lambda)} c_{w,\lambda}^{\fs\fs}. \tag{a}
\end{equation*}
Thus, up to signs, the numbers $c_{w,\lambda}$ are the leading coefficients
of character values as defined by Lusztig \cite{Lu4}. We claim that
\begin{equation*}
c_{w,\lambda}=0 \quad \mbox{unless $w,w^{-1}$ belong to the same left
$\tilde{J}$-cell}. \tag{b}
\end{equation*}
Indeed, assume that $c_{w,\lambda} \neq 0$. Using (a), we conclude that 
$t_w$ can not be nilpotent. Consequently, $t_w^2\neq 0$ and so 
$\tilde{\gamma}_{w,w,x} \neq 0$ for some $x \in W$. Hence, by {\bf (J1)}, 
the elements $w,w^{-1}$ must belong to the same left $\tilde{J}$-cell, as 
claimed. (In the equal parameter case, this argument is due to Lusztig 
\cite[3.5]{Lu4}.)
\end{rem}

\begin{exmp} \label{remc} Recall that $1_{\tilde{J}}=\sum_{w \in W} 
\tilde{n}_w t_w$. Let $\cD=\{w \in W \mid \tilde{n}_w\neq 0\}$. If 
{\bf P1}--{\bf P15} in \cite[14.2]{Lusztig03} were known to hold for 
$W,L$, then we could deduce that every element of $\cD$ is an involution. 
In the present context, we can at least show that $w,w^{-1}$ belong to 
the same left $\tilde{J}$-cell. Indeed, if $\tilde{n}_w\neq 0$, then the 
defining equation shows that $c_{w,\lambda} \neq 0$ for some $\lambda \in 
M(\lambda)$. So Remark~\ref{ll4}(b) implies that $w,w^{-1}$ belong to the 
same left $\tilde{J}$-cell, as claimed. In particular, if we are in a case 
where all $\tilde{J}$-modules $[\fC]_{\tilde{J}}$ are multiplicity-free (for
any left $\tilde{J}$-cell $\fC$ of $W$), then $w^2=1$ for all $w \in \cD$ 
(see Corollary~\ref{cormult}).
\end{exmp}

Now let $\Gamma$ be a left cell of $W$. Recall that we have a corresponding
left $K[W]$-module $[\Gamma]_1$. For any $\lambda \in \Lambda$, let 
$m(\Gamma,\lambda)$ denote the multiplicity of $E^\lambda \in \Irr_K(W)$ as 
an irreducible constituent of $[\fC]_1$. Now, $\Gamma$ is a union of left 
$\tilde{J}$-cells; see {\bf (J3)}. Thus, in order to complete the proof of 
Theorem~\ref{propleft}, we need to compare the multiplicities $m(\Gamma,
\lambda)$ and $\tilde{m}(\Gamma,\lambda)$. 

\begin{lem} \label{lem32} With the above notation, we have $\tilde{m}
(\Gamma,\lambda)=m(\Gamma,\lambda)$ for any $\lambda \in \Lambda$.
Consequently, Theorem~\ref{propleft} holds. 
\end{lem}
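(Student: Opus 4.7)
The plan is to reduce both multiplicities to character computations on the generic Iwahori--Hecke algebra $H$ associated with $W,L$, since $H$ is the common source from which both the cell module $[\Gamma]_1$ (by specialization of parameters) and the ring $\tilde{J}$ (through the leading matrix coefficients) are built. Once the equality $\tilde{m}(\Gamma,\lambda)=m(\Gamma,\lambda)$ is in hand, Theorem~\ref{propleft} follows at once: by {\bf (J3)} a left cell $\Gamma$ is a union of left $\tilde{J}$-cells, so Lemma~\ref{lem31} gives $|\Gamma_{(2)}|=\sum_{\lambda}\tilde{m}(\Gamma,\lambda)=\sum_{\lambda} m(\Gamma,\lambda)$, which is precisely the number of simple $W$-module summands of $[\Gamma]_1$.

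First I would recall from \cite[\S 1.4]{geja} that $[\Gamma]_1$ arises as the $v\mapsto 1$ specialization of an $H$-module $[\Gamma]_H$ whose Kazhdan--Lusztig basis is indexed by $\Gamma$. Since this specialization is semisimple (both $K[W]$ and $H\otimes K$ are split semisimple with matching irreducible dimensions), for each $\lambda\in\Lambda$ the multiplicity $m(\Gamma,\lambda)$ equals the multiplicity of the generic simple $H$-module $E^\lambda_H$ in $[\Gamma]_H$. The latter can be expressed, via the standard character orthogonality formula for the split semisimple symmetric algebra $H$ applied to its Kazhdan--Lusztig basis and its dual, as an explicit sum over $w\in \Gamma$ involving the matrix entries of $C_w$ in a representation affording $E^\lambda_H$.

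Second, I would express $\tilde{m}(\Gamma,\lambda)$ in a parallel way, using that $\tilde{J}$ is split semisimple and symmetric with $\dagger$-symmetric basis $\{t_w\mid w\in W\}$ (so $t_w^{\vee}=t_{w^{-1}}$) and Schur elements $f_\lambda$. By Remark~\ref{ll4}(a), the value $\chi_{\tilde{E}^\lambda}(t_w)=c_{w,\lambda}$ is exactly the corresponding sum of leading matrix coefficients from \cite[\S 1.4]{geja}. The key point is then that, after dividing the $H$-side formula by the generic Schur element and extracting leading terms, the $H$-character sum computing $m(\Gamma,\lambda)$ transforms precisely into the $\tilde{J}$-character sum computing $\tilde{m}(\Gamma,\lambda)$; this is exactly the compatibility encoded in the definitions of $\tilde{n}_w$, $f_\lambda$ and $\tilde{\gamma}_{x,y,z}$.

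The hardest part will be the careful bookkeeping of leading coefficients and Schur elements through the passage from $H$ to $\tilde{J}$, making sure that the two character formulas really coincide term by term rather than merely after summation over all of $W$. This is essentially the content of the technical results around leading matrix coefficients in \cite[\S 1.5--1.8]{geja}; once they are applied, the desired multiplicity identity $\tilde{m}(\Gamma,\lambda)=m(\Gamma,\lambda)$ follows, and Theorem~\ref{propleft} is then immediate from Lemma~\ref{lem31}.
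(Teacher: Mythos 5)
Your plan is essentially the paper's proof: both multiplicities are expressed as the same sum $\sum_{\fs,\ft\in M(\lambda)}\sum_{w\in\Gamma}c_{w,\lambda}^{\fs\ft}c_{w^{-1},\lambda}^{\ft\fs}$ divided by $f_\lambda|M(\lambda)|$, using \cite[Prop.~4.7]{my02} on the $W$-side and \cite[1.8.1]{geja} (summed over the left $\tilde{J}$-cells contained in $\Gamma$) on the $\tilde{J}$-side, and Theorem~\ref{propleft} then follows from Lemma~\ref{lem31}. You are right that the genuine content lies in restricting the orthogonality sums from $W$ to $\Gamma$ (resp.\ to $\fC_i$); this is exactly what the two cited identities provide, rather than being a formal consequence of Schur orthogonality.
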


\begin{proof}  Let $\lambda \in \Lambda$. Using \cite[Prop.~4.7]{my02} (see 
also the proof of \cite[2.2.4]{geja}), we have 
\[ \sum_{\fs,\ft \in M(\lambda)} \sum_{w \in \Gamma} c_{w,\lambda}^{\fs\ft}
\, c_{w^{-1},\lambda}^{\ft\fs}=m(\Gamma,\lambda)\,f_\lambda\,|M(\lambda)|.\]
On the other hand, let $\fC_1,\ldots,\fC_r$ be the left $\tilde{J}$-cells
which are contained in $\Gamma$. Then, using \cite[1.8.1]{geja}, we have
\[ \sum_{\fs,\ft \in M(\lambda)} \sum_{w \in \fC_i} c_{w,\lambda}^{\fs\ft}
\, c_{w^{-1},\lambda}^{\ft\fs}=\tilde{m}(\fC_i,\lambda)\,f_\lambda \,
|M(\lambda)| \qquad \mbox{for $i=1,\ldots,r$}.\]
Summing these identities over $i=1,\ldots,r$ and using the fact that
$\tilde{m}(\Gamma,\lambda)=\tilde{m}(\fC_1,\lambda)+\ldots+\tilde{m}
(\fC_r,\lambda)$, we obtain 
\[ \sum_{\fs,\ft \in M(\lambda)} \sum_{w \in \Gamma} c_{w,\lambda}^{\fs\ft}
\, c_{w^{-1},\lambda}^{\ft\fs}=\tilde{m}(\Gamma,\lambda)\,f_\lambda\,
|M(\lambda)|.\]
We conclude that $m(\Gamma,\lambda)=\tilde{m}(\Gamma,\lambda)$, as required.
In combination with Lemma~\ref{lem31}, this yields that $|\Gamma_{(2)}|=
\sum_{\lambda \in \Lambda} m(\Gamma,\lambda)$. Note that the right hand 
side is just the number of terms in a decomposition of $[\Gamma]_1$ as a 
direct sum of simple $K[W]$-modules. Thus, Theorem~\ref{propleft} is proved.
\end{proof}

\begin{cor}[See Lusztig \protect{\cite[5.8]{LuBook}} in the equal 
parameter case] \label{cor32} Let $\Gamma$ be a left cell of $W$ and 
$\lambda,\mu \in \Lambda$. Then
\[ \sum_{w \in \Gamma} c_{w,\lambda}\, c_{w^{-1},\mu}=\left\{
\begin{array}{cl} m(\Gamma,\lambda)\,f_\lambda & \qquad 
\mbox{if $\lambda=\mu$},\\ 0 & \qquad \mbox{otherwise}.\end{array}\right.\]
\end{cor}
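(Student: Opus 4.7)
Plan: The strategy is a direct refinement of the calculation used in Lemma~\ref{lem32}, allowing the two characters $\lambda,\mu$ to differ. Writing
\[ c_{w,\lambda}=\sum_{\fs\in M(\lambda)}c_{w,\lambda}^{\fs\fs} \qquad\text{and}\qquad c_{w^{-1},\mu}=\sum_{\fu\in M(\mu)}c_{w^{-1},\mu}^{\fu\fu}, \]
the sum in question becomes
\[ \sum_{w\in\Gamma} c_{w,\lambda}\,c_{w^{-1},\mu} = \sum_{\fs\in M(\lambda)}\sum_{\fu\in M(\mu)}\sum_{w\in\Gamma} c_{w,\lambda}^{\fs\fs}\,c_{w^{-1},\mu}^{\fu\fu}. \]

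The key input is the \emph{unsummed} form of the orthogonality identity used in Lemma~\ref{lem32}. Specifically, I would establish---for a suitable (cell-compatible) basis $M(\lambda)$ of $\tilde{E}^\lambda$---the refined Schur-type relation
\[ \sum_{w\in\Gamma} c_{w,\lambda}^{\fs\ft}\,c_{w^{-1},\mu}^{\fu\fv} = \delta_{\lambda\mu}\,\delta_{\ft\fu}\,\delta_{\fs\fv}\,m_\fs(\Gamma,\lambda)\,f_\lambda, \]
where $m_\fs(\Gamma,\lambda)\in\Z_{\geq 0}$ satisfy $\sum_{\fs\in M(\lambda)} m_\fs(\Gamma,\lambda)=m(\Gamma,\lambda)$. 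As a consistency check, specialising $\mu=\lambda$, $\fu=\ft$, $\fv=\fs$ and summing over $\fs,\ft\in M(\lambda)$ recovers precisely the identity
\[ \sum_{\fs,\ft\in M(\lambda)}\sum_{w\in\Gamma} c_{w,\lambda}^{\fs\ft}\,c_{w^{-1},\lambda}^{\ft\fs}=m(\Gamma,\lambda)\,f_\lambda\,|M(\lambda)| \]
cited from \cite[Prop.~4.7]{my02} in the proof of Lemma~\ref{lem32}.

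Specialising this relation instead to $\ft=\fs$ and $\fv=\fu$ collapses the delta factors $\delta_{\ft\fu}\,\delta_{\fs\fv}$ to $\delta_{\fs\fu}$, so that summing over $\fs\in M(\lambda)$ and $\fu\in M(\mu)$ gives
\[ \sum_{w\in\Gamma} c_{w,\lambda}\,c_{w^{-1},\mu} = \delta_{\lambda\mu}\,f_\lambda\sum_{\fs\in M(\lambda)} m_\fs(\Gamma,\lambda) = \delta_{\lambda\mu}\,m(\Gamma,\lambda)\,f_\lambda, \]
which is exactly the asserted identity.

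The main obstacle is justifying the refined orthogonality relation above. This should follow by the same technique as the proof of \cite[Prop.~4.7]{my02}: decompose $\Gamma$ into left $\tilde{J}$-cells via property~{\bf (J3)}, invoke the standard Schur orthogonality of matrix coefficients of $\tilde{J}$ over $W$ (which gives the total $\delta_{\lambda\mu}\delta_{\ft\fu}\delta_{\fs\fv}\,f_\lambda$ when summed over all of $W$), and finally identify the total $\sum_\fs m_\fs(\Gamma,\lambda)$ with $m(\Gamma,\lambda)$ by appealing to Lemma~\ref{lem32}.
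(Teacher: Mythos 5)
Your strategy runs parallel to the paper's---decompose $\Gamma$ into left $\tilde{J}$-cells via {\bf (J3)}, use orthogonality of the leading matrix coefficients at the cell level, sum over the cells, and invoke Lemma~\ref{lem32} to replace $\tilde{m}(\Gamma,\lambda)$ by $m(\Gamma,\lambda)$---but there is a gap at the crucial step. The whole content of the corollary, beyond Lemma~\ref{lem32}, is the orthogonality identity \emph{for a single left $\tilde{J}$-cell}, including the vanishing when $\lambda\neq\mu$. You propose to derive your ``refined Schur-type relation'' over $\Gamma$ from the ``standard Schur orthogonality of matrix coefficients of $\tilde{J}$ over $W$'', but that only controls the sum over \emph{all} of $W$; it does not, by itself, force the contribution of an individual left cell (or left $\tilde{J}$-cell) to vanish when $\lambda\neq\mu$, $\ft\neq\fu$, or $\fs\neq\fv$. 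The reference \cite[Prop.~4.7]{my02} you lean on as a model treats only the diagonal case $\lambda=\mu$ and does not yield the off-diagonal vanishing. What the paper actually cites is \cite[1.8.1]{geja}, which already supplies, for each left $\tilde{J}$-cell $\fC_i\subseteq\Gamma$, the full statement
\[
\sum_{w\in\fC_i}c_{w,\lambda}\,c_{w^{-1},\mu}=\delta_{\lambda\mu}\,\tilde{m}(\fC_i,\lambda)\,f_\lambda ,
\]
after which the corollary reduces to summing over $i$ and applying Lemma~\ref{lem32}. Your detour through the unsummed matrix-entry relation with the unspecified quantities $m_\fs(\Gamma,\lambda)$ is not needed, and without a cell-level input such as \cite[1.8.1]{geja} the key vanishing for $\lambda\neq\mu$ remains unproved. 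If you replace the appeal to orthogonality over all of $W$ by the left $\tilde{J}$-cell-level orthogonality from \cite[1.8.1]{geja} (and state it per $\tilde{J}$-cell $\fC_i$ rather than over $\Gamma$ directly), the argument closes and essentially reproduces the paper's proof.
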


\begin{proof} Let $\fC_1,\ldots, \fC_r$ be the left $\tilde{J}$-cells which 
are contained in $\Gamma$. By \cite[1.8.1]{geja}, it is already known that,
for any $i \in \{1,\ldots,r\}$, we have 
\[ \sum_{w \in \fC_i} c_{w,\lambda}\, c_{w^{-1},\mu}=\left\{
\begin{array}{cl} \tilde{m}(\fC_i,\lambda)\,f_\lambda & \qquad 
\mbox{if $\lambda=\mu$},\\ 0 & \qquad \mbox{otherwise}.\end{array}\right.\]
We sum these identities over all $i=1,\ldots,r$. Then it remains to use 
Lemma~\ref{lem32} and the fact that $\tilde{m}(\Gamma,\lambda)=
\tilde{m}(\fC_1,\lambda)+ \ldots+\tilde{m}(\fC_r,\lambda)$.
\end{proof}

\begin{cor}[See Lusztig \protect{\cite[12.15]{LuBook}} in the equal 
parameter case] \label{cor30} Let $\Gamma,\Gamma'$ be left cells of $W$. 
Then 
\[ \dim \Hom_{K[W]}([\Gamma]_1,[\Gamma']_1)=|\Gamma\cap (\Gamma')^{-1}|.\]
Furthermore, we have $\Hom_{K[W]}([\Gamma]_1,[\Gamma']_1)=\{0\}$ unless 
$\Gamma, \Gamma'$ are contained in the same two-sided cell of $W$.
\end{cor}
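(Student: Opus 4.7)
The plan is to bootstrap from the analogous statement for $\tilde{J}$-cells (Lemma~\ref{copylus}(a),(b)) using the multiplicity identity $m(\Gamma,\lambda)=\tilde{m}(\Gamma,\lambda)$ provided by Lemma~\ref{lem32}. Since $K[W]$ is semisimple and split over $K$, Schur's lemma gives
\[\dim\Hom_{K[W]}([\Gamma]_1,[\Gamma']_1)=\sum_{\lambda\in\Lambda}m(\Gamma,\lambda)\,m(\Gamma',\lambda),\]
and by Lemma~\ref{lem32} this equals $\sum_{\lambda}\tilde{m}(\Gamma,\lambda)\,\tilde{m}(\Gamma',\lambda)$.

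Next, I would invoke property {\bf (J3)} to write $\Gamma=\fC_1\sqcup\cdots\sqcup\fC_r$ and $\Gamma'=\fC_1'\sqcup\cdots\sqcup\fC_s'$ as disjoint unions of left $\tilde{J}$-cells. Additivity of multiplicities together with Schur's lemma applied to the split semisimple algebra $\tilde{J}$ yields
\[\sum_{\lambda}\tilde{m}(\Gamma,\lambda)\,\tilde{m}(\Gamma',\lambda)=\sum_{i,j}\sum_{\lambda}\tilde{m}(\fC_i,\lambda)\,\tilde{m}(\fC_j',\lambda)=\sum_{i,j}\dim\Hom_{\tilde{J}}([\fC_i]_{\tilde{J}},[\fC_j']_{\tilde{J}}).\]
Applying Lemma~\ref{copylus}(b) term by term rewrites this as $\sum_{i,j}|\fC_i\cap(\fC_j')^{-1}|$, and since the $\fC_i$ partition $\Gamma$ while the $(\fC_j')^{-1}$ partition $(\Gamma')^{-1}$, this sum collapses to $|\Gamma\cap(\Gamma')^{-1}|$, establishing the first assertion.

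For the vanishing statement, {\bf (J3)} also asserts that every two-sided cell of $W$ is a union of two-sided $\tilde{J}$-cells. Hence if $\Gamma$ and $\Gamma'$ belong to distinct two-sided cells, then no $\fC_i$ can share a two-sided $\tilde{J}$-cell with any $\fC_j'$, and Lemma~\ref{copylus}(a) forces each summand $\dim\Hom_{\tilde{J}}([\fC_i]_{\tilde{J}},[\fC_j']_{\tilde{J}})$ to vanish. I do not anticipate any serious obstacle: the entire argument is a clean bookkeeping exercise transferring the already-established $\tilde{J}$-level facts to $K[W]$, the only subtlety being the compatibility of the two-sided cell partitions of $W$ and of $\tilde{J}$.
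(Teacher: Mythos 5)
Your proposal is correct and follows essentially the same path as the paper: both convert $\dim\Hom_{K[W]}$ to $\dim\Hom_{\tilde{J}}$ via Schur's lemma and Lemma~\ref{lem32}, decompose $\Gamma$ and $\Gamma'$ into left $\tilde{J}$-cells using {\bf (J3)}, apply Lemma~\ref{copylus}(b) summand by summand, and derive the vanishing statement from {\bf (J3)} combined with Lemma~\ref{copylus}(a). (You also correctly identify Lemma~\ref{lem32} as the source of $m(\Gamma,\lambda)=\tilde{m}(\Gamma,\lambda)$, whereas the paper's proof text cites Lemma~\ref{lem31} at that step, apparently a minor slip.)
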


\begin{proof} We have 
\begin{align*}
\dim \Hom_{K[W]}([\Gamma]_1,[\Gamma']_1)&= \sum_{\lambda \in \Lambda} 
m(\Gamma,\lambda)\, m(\Gamma',\lambda)\\
&=\sum_{\lambda \in \Lambda} \tilde{m}(\Gamma,\lambda)\, 
\tilde{m}(\Gamma',\lambda) \qquad \mbox{(see Lemma~\ref{lem31})}\\
&=\dim \Hom_{\tilde{J}}([\Gamma]_{\tilde{J}},[\Gamma']_{\tilde{J}}).
\end{align*}
Thus, it is sufficient to show that $\dim \Hom_{\tilde{J}}
([\Gamma]_{\tilde{J}},[\Gamma']_{\tilde{J}})=|\Gamma\cap (\Gamma')^{-1}|$. 
To see this, let $\fC_1,\ldots, \fC_r$ be the left $\tilde{J}$-cells which 
are contained in $\Gamma$ and let $\fC_1',\ldots,\fC_s'$ be the left 
$\tilde{J}$-cells which are contained in $\Gamma'$. Then we have 
\[ \dim\Hom_{\tilde{J}}([\Gamma]_{\tilde{J}}, [\Gamma']_{\tilde{J}})=
\sum_{1\leq i\leq r} \sum_{1\leq j \leq s} \dim\Hom_{\tilde{J}}
([\fC_i]_{\tilde{J}}, [\fC_j']_{\tilde{J}}),\]
and so the desired equality immediately follows from Lemma~\ref{copylus}(b). 
Finally, if $\Gamma,\Gamma'$ are not contained in the same two-sided cell, 
then $\fC_i,\fC_j$ (for any $i,j$) are not contained in the same 
two-sided $\tilde{J}$-cell; see {\bf (J3)}. Thus, Lemma~\ref{copylus}(a) 
and the above formula show that $\Hom_{K[W]}([\Gamma]_1,[\Gamma']_1)=
\{0\}$ in this case.
\end{proof}

\begin{exmp} \label{expl} Let $\Gamma,\Gamma'$ be left cells of $W$ which 
are contained in the same two-sided cell. If $L(s)=1$ for all $s \in S$, it 
is known that we always have $\Gamma \cap (\Gamma')^{-1}\neq \varnothing$; 
see Lusztig \cite[12.16]{LuBook}.~--~For general $L$, it can happen that
$\Gamma\cap (\Gamma')^{-1}=\varnothing$; see \cite[Cor.~4.8]{mykl} (case 
``$b=2a$'' in Table~2) for an example in type $F_4$. 
\end{exmp}

\begin{cor} \label{cor31} Let $\Gamma$ be a left cell of $W$. Then 
$[\Gamma]_1$ is multiplicity-free if and only if $\Gamma_{(2)}=\Gamma 
\cap \Gamma^{-1}$.
\end{cor}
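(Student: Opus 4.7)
The statement is the Kazhdan--Lusztig analogue of Corollary~\ref{cormult}, with the cell module $[\Gamma]_1$ over $K[W]$ replacing the $\tilde{J}$-cell module $[\fC]_{\tilde{J}}$. My plan is to run exactly the same numerical argument as in Corollary~\ref{cormult}, after replacing the two ingredients used there by their $K[W]$-counterparts established earlier in this section.

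First I would note that $\Gamma_{(2)}\subseteq\Gamma\cap\Gamma^{-1}$ always holds, since every involution $w$ satisfies $w=w^{-1}$. Next I would compute the two cardinalities as sums of multiplicities. On the one hand, Theorem~\ref{propleft} (whose proof is completed by Lemma~\ref{lem32}) gives
\[ |\Gamma_{(2)}|=\sum_{\lambda\in\Lambda} m(\Gamma,\lambda).\]
On the other hand, applying Corollary~\ref{cor30} with $\Gamma'=\Gamma$ yields
\[ |\Gamma\cap\Gamma^{-1}|=\dim\End_{K[W]}([\Gamma]_1)=\sum_{\lambda\in\Lambda} m(\Gamma,\lambda)^2,\]
the second equality being a standard consequence of the fact that $K[W]$ is split semisimple (so $\End_{K[W]}([\Gamma]_1)$ is a product of matrix algebras of sizes $m(\Gamma,\lambda)$).

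To finish, I would observe that since $m(\Gamma,\lambda)\in\Z_{\geq 0}$, one has $m(\Gamma,\lambda)^2\geq m(\Gamma,\lambda)$ with equality if and only if $m(\Gamma,\lambda)\in\{0,1\}$. Thus $[\Gamma]_1$ is multiplicity-free precisely when $\sum_\lambda m(\Gamma,\lambda)=\sum_\lambda m(\Gamma,\lambda)^2$, that is, when $|\Gamma_{(2)}|=|\Gamma\cap\Gamma^{-1}|$; combined with the inclusion $\Gamma_{(2)}\subseteq\Gamma\cap\Gamma^{-1}$, this is equivalent to $\Gamma_{(2)}=\Gamma\cap\Gamma^{-1}$.

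There is no real obstacle here: once Lemma~\ref{lem32} and Corollary~\ref{cor30} are in place, this is a routine counting argument formally identical to the proof of Corollary~\ref{cormult}. The only thing one must be slightly careful about is that the formula $|\Gamma\cap\Gamma^{-1}|=\sum_\lambda m(\Gamma,\lambda)^2$ is read off from Corollary~\ref{cor30}, which was itself proved via the $\tilde{J}$-side; so the argument genuinely relies on the bridge between $W$- and $\tilde{J}$-multiplicities provided by Lemma~\ref{lem32}.
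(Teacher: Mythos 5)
Your argument is exactly the one the paper has in mind: it invokes Theorem~\ref{propleft} (via Lemma~\ref{lem32}) for $|\Gamma_{(2)}|=\sum_\lambda m(\Gamma,\lambda)$, Corollary~\ref{cor30} with $\Gamma'=\Gamma$ for $|\Gamma\cap\Gamma^{-1}|=\sum_\lambda m(\Gamma,\lambda)^2$, and then repeats the counting argument of Corollary~\ref{cormult}. This matches the paper's proof, which states that once Theorem~\ref{propleft} and Corollary~\ref{cor30} are established the result follows by the argument analogous to Corollary~\ref{cormult}.
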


\begin{proof} Once Theorem~\ref{propleft} and Corollary~\ref{cor30} are
established, this follows by an argument entirely analogous to that in
Corollary~\ref{cormult}.
\end{proof}

Now let $\ulc$ be a two-sided cell of $W$. Given $E^\lambda\in \Irr_K(W)$, 
we write $E^\lambda\sim_L \ulc$ and say that $E^\lambda$ belongs to $\ulc$ 
if $E^\lambda$ is a constituent of some $[\Gamma]_1$ where $\Gamma$ is a 
left cell which is contained in $\ulc$.  

\begin{cor} \label{proptwo} The number of involutions in a two-sided cell
$\ulc$ of $W$ is equal to $\sum_\lambda \dim E^\lambda$ where the sum runs 
over all $\lambda \in \Lambda$ such that $E^\lambda \sim_L \ulc$.
\end{cor}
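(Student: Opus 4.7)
The plan is to reduce to Theorem~\ref{propleft} applied to each left cell inside $\ulc$, and then exchange the order of summation. Let $\Gamma_1,\ldots,\Gamma_k$ be the left cells contained in $\ulc$. Since every involution of $W$ lies in a unique left cell, Theorem~\ref{propleft} (that is, the equality $|\Gamma_{(2)}|=\sum_\lambda m(\Gamma,\lambda)$ established in Lemma~\ref{lem32}) yields
\[ |\ulc_{(2)}| \;=\; \sum_{i=1}^k |(\Gamma_i)_{(2)}| \;=\; \sum_{\lambda\in\Lambda}\,\sum_{i=1}^k m(\Gamma_i,\lambda). \]
The remaining task is to identify the inner sum: it should equal $\dim E^\lambda$ when $E^\lambda\sim_L \ulc$, and $0$ otherwise.

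The vanishing case is immediate from the definition of $\sim_L$: if no $[\Gamma_i]_1$ has $E^\lambda$ as a constituent, every term is zero. For the non-trivial case I would combine two ingredients. First, since the Kazhdan--Lusztig basis partitions $W$ into left cells and $K[W]$ is semisimple, the regular representation decomposes as $K[W] \cong \bigoplus_{\Gamma} [\Gamma]_1$, the sum running over \emph{all} left cells of $W$; this is a standard consequence of the triangular behaviour of $\{C_w\}$ on the cell filtration. Consequently $\dim E^\lambda=\sum_\Gamma m(\Gamma,\lambda)$. Second, I would invoke Corollary~\ref{cor30} to show that whenever $E^\lambda\sim_L\ulc$, the multiplicity $m(\Gamma,\lambda)$ vanishes on every left cell $\Gamma$ outside $\ulc$: pick $\Gamma_j\subseteq\ulc$ witnessing $E^\lambda\sim_L\ulc$; if some left cell $\Gamma \not\subseteq \ulc$ had $m(\Gamma,\lambda)>0$, then $E^\lambda$ would be a common constituent of $[\Gamma_j]_1$ and $[\Gamma]_1$, forcing $\Hom_{K[W]}([\Gamma_j]_1,[\Gamma]_1)\ne\{0\}$ by semisimplicity; Corollary~\ref{cor30} would then put $\Gamma$ in the same two-sided cell as $\Gamma_j$, i.e., $\Gamma\subseteq\ulc$, a contradiction. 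Hence $\sum_{i=1}^k m(\Gamma_i,\lambda)=\sum_\Gamma m(\Gamma,\lambda)=\dim E^\lambda$.

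I do not anticipate a serious obstacle. The one point that must be cited rather than rederived is the decomposition $K[W]\cong \bigoplus_\Gamma [\Gamma]_1$ of the regular representation into left cell modules. Everything else is formal: the two ingredients above combine to show that, for each $\lambda$, the inner sum contributes $\dim E^\lambda$ precisely when $E^\lambda \sim_L \ulc$, so the double sum collapses to $\sum_{\lambda:\,E^\lambda\sim_L\ulc} \dim E^\lambda$, which is the assertion of the corollary.
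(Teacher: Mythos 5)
Your proposal is correct and follows essentially the same route as the paper: both start from Theorem~\ref{propleft} applied to each left cell in $\ulc$, both use the fact that $\bigoplus_{\Gamma}[\Gamma]_1$ realises the regular representation, and both invoke Corollary~\ref{cor30} to ensure that constituents of left cell modules inside $\ulc$ cannot also occur in left cell modules outside $\ulc$. The only difference is presentational --- you argue multiplicity-by-multiplicity while the paper phrases the same identity as an equation in the Grothendieck group --- so no further comment is needed.
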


\begin{proof} Let $\Gamma^1, \ldots,\Gamma^m$ be all the left cells of $W$ 
(with respect to the given $L$). Then the direct sum $\bigoplus_{1 \leq i 
\leq m} [\Gamma^i]_1$ is isomorphic to the regular representation of $W$ 
and so each $E^\lambda\in \Irr_K(W)$ appears with multiplicity equal to 
$\dim E^\lambda$ in that direct sum. Now, by Corollary~\ref{cor30}, we 
have $\Hom_{K[W]} ([\Gamma^i]_1, [\Gamma^j]_1)=\{0\}$ whenever $\Gamma^i$, 
$\Gamma^j$ are not contained in the same two-sided cell of $W$. Consequently,
if $I$ denotes the set of all $i \in \{1,\ldots,m\}$ such that $\Gamma^i 
\subseteq \ulc$, then we have 
\[\sum_{i \in I} [\Gamma^i]_1=\sum_{\lambda \in \Lambda \,:\, E^\lambda 
\sim_L \ulc} (\dim E^\lambda) \,E^\lambda\]
(in the appropriate Grothendieck group of representations). Thus, the
number of terms in a decomposition of $\bigoplus_{i \in I} [\Gamma^i]_1$ 
as a direct sum of irreducible representations is equal to $\sum_\lambda
\dim E^\lambda$ where the sum runs over all $\lambda \in \Lambda$ such 
that $E^\lambda \sim_L \ulc$. On the other hand, by Theorem~\ref{propleft}, 
this number is also equal to the sum $\sum_{i \in I} |\Gamma_{(2)}^i|$ 
which is just the number of involutions in~$\ulc$.
\end{proof}

\begin{exmp}[Lusztig] \label{exp1} Assume that we are in the equal parameter 
case where $L(s)=1$ for all $s \in S$. Let $\ulc$ be a two-sided cell of $W$. 
By \cite[Chap.~4]{LuBook}, one can attach a certain finite group $\cG=
\cG_{\ulc}$ to $\ulc$ (or the corresponding family of $\Irr_K(W)$). Assume 
now that $W$ is of classical type. Then $|\cG_{\ulc}|=2^d$ for some $d 
\geq 0$ and, by \cite[12.17]{LuBook}, it is known that $[\Gamma]_1$ is 
multiplicity-free with exactly $2^d$ irreducible constituents, for every
left cell $\Gamma \subseteq \ulc$. Hence, $\Gamma_{(2)}=\Gamma\cap 
\Gamma^{-1}$ also has cardinality $2^d$ for any left cell $\Gamma\subseteq 
\ulc$. Now let $E_\ulc$ be the unique special representation which belongs
to $\ulc$ (see \cite[4.14, 5.25]{LuBook}). Since $E_\ulc$ occurs with 
multiplicity $1$ in $[\Gamma]_1$ for every left cell $\Gamma\subseteq\ulc$,
we conclude that $|\ulc_{(2)}|= 2^d \dim E_\ulc$. Combining this with 
Corollary~\ref{proptwo}, we obtain the following identity:
\[ 2^d \dim E_\ulc=\sum_{\lambda \in \Lambda\,:\, E^\lambda \sim_L \ulc} 
\dim E^\lambda,\]
which shows that the order of the group $\cG_{\ulc}$ is 
determined by the set of all $E^\lambda\in \Irr_K(W)$ which belong to 
$\ulc$. (If $W$ is of exceptional type, then such an identity 
will not hold in general.) 
\end{exmp}

\begin{rem} \label{finrem} Assume that the conjectural properties 
{\bf P1}--{\bf P15} in \cite[14.2]{Lusztig03} hold for $W,L$. Then we do
have $J=\tilde{J}$; see \cite[2.3.16]{geja}. In particular, this implies
that:
\begin{itemize}
\item $\tilde{\gamma}_{x,y,z} \in \Z$ and $\tilde{n}_w=\pm 1$ for all
$x,y,z,w\in W$;
\item every left $\tilde{J}$-cell contains a unique element of $\cD=
\{w \in W \mid \tilde{n}_w\neq 0\}$;
\item the left cells of $W$ are precisely the left $\tilde{J}$-cells.
\end{itemize}
(See \cite[2.5.3]{geja}.) It would be highly interesting to prove these 
statements directly, without reference to {\bf P1}--{\bf P15}; at present, 
we do not see any way of doing this. In \cite{mypamq}, we have formulated 
a somewhat different set of conjectural properties which, in some cases, are 
easier to verify than {\bf P1}--{\bf P15}. However, the case where $W$ if 
of type $B_n$ and $L$ is a general weight function remains completely open.
\end{rem}

\medskip
\noindent {\bf Acknowledgements.} I wish to thank George Lusztig for
a discussion at the Isaac Newton Institute (Cambridge, September 2011)
where he suggested that Corollary~\ref{proptwo} might be true in general.

\end{document}